\documentclass[a4paper, 11pt,reqno]{amsart}

%
%
\usepackage[T1]{fontenc}      
\usepackage[foot]{amsaddr}    

%
%
\usepackage[british]{babel}   
\usepackage{csquotes}         
\usepackage[babel]{microtype} 

%
%
\usepackage{mathtools}        
\usepackage{amssymb}          
\usepackage{amsthm}           
\usepackage{amstext}          
\usepackage{mleftright}       
\usepackage{gensymb}          
\usepackage[makeroom]{cancel} 
\usepackage{mathdots}         
\usepackage{mathrsfs}         
\usepackage{dsfont}           

%
%
\usepackage{stickstootext}
\usepackage[stickstoo,varbb]{newtxmath} 
\makeatletter
\DeclareFontFamily{U}{ntxmia}{\skewchar \font =127}
 \DeclareFontShape{U}{ntxmia}{m}{it}{
                        <-> \ntxmath@scaled ntxmia
                      }{}    
                      \DeclareFontShape{U}{ntxmia}{b}{it}{
                        <-> \ntxmath@scaled ntxbmia
                      }{}
\makeatother

%
%
\usepackage{graphicx}         
\usepackage{psfrag}           
\usepackage{caption}          
\usepackage{tikz}             
\usetikzlibrary{backgrounds}

%
%
\usepackage{booktabs}         

%
%
\usepackage{enumitem}         

%
%
\usepackage[margin=1in]{geometry} 

%
%
\usepackage[textsize=footnotesize]{todonotes}

%
%

\usepackage[numbers,sort]{natbib}

\makeatletter
\def\NAT@spacechar{~}
\makeatother


%
%
\usepackage{hyperref}              
\usepackage[capitalise]{cleveref}  
\hypersetup{colorlinks=true,
   citecolor=blue,
   filecolor=blue,
   linkcolor=blue,
   urlcolor=blue
}
\usepackage{url}

\makeatletter
\if@cref@capitalise
\crefname{figure}{figure}{figures}
\crefname{claim}{Claim}{Claims}
\else
\crefname{figure}{Figure}{Figures}
\crefname{claim}{claim}{claims}
\fi
\makeatother
\Crefname{figure}{Figure}{Figures}
\Crefname{claim}{Claim}{Claims}

%
%
\usepackage{algorithm}
\usepackage{algpseudocode}

\allowdisplaybreaks



\theoremstyle{definition}
\newtheorem{definition}{Definition}

\theoremstyle{plain}
\newtheorem{claim}{Claim}

\newtheorem{proposition}[definition]{Proposition}
\newtheorem{theorem}[definition]{Theorem}

\newtheorem{lemma}[definition]{Lemma}

\newtheorem{conjecture}[definition]{Conjecture}
\newtheorem{question}[definition]{Question}

\newenvironment{claimproof}{%
\let\origqed=\qedsymbol%
\renewcommand{\qedsymbol}{$\blacktriangleleft$}%
\begin{proof}}{\end{proof}\let\qedsymbol=\origqed}



\renewcommand{\binom}[2]{\ensuremath{\mleft(\kern-.1em\genfrac{}{}{0pt}{}{#1}{#2}\kern-.1em\mright)}}    
\newcommand{\inbinom}[2]{\ensuremath{\bigl(\kern-.1em\genfrac{}{}{0pt}{}{#1}{#2}\kern-.1em\bigr)}} 

\newcommand*\nume{\ensuremath{\mathrm{e}}}




\makeatletter
\def\moverlay{\mathpalette\mov@rlay}
\def\mov@rlay#1#2{\leavevmode\vtop{%
  \baselineskip\z@skip \lineskiplimit-\maxdimen
  \ialign{\hfil$\m@th#1##$\hfil\cr#2\crcr}}}
\newcommand{\charfusion}[3][\mathord]{
    #1{\ifx#1\mathop\vphantom{#2}\fi
        \mathpalette\mov@rlay{#2\cr#3}
      }
    \ifx#1\mathop\expandafter\displaylimits\fi}
\makeatother




\newcommand{\eps}{\epsilon}

%


\newcommand{\COMMENT}[1]{}
\newcommand{\COMNEW}[1]{}







\title{Seymour's second neighbourhood conjecture:\linebreak{} random graphs and reductions}

\author[A.~Espuny D\'iaz]{Alberto Espuny D\'iaz}
\email{espuny-diaz@informatik.uni-heidelberg.de}
\address[Espuny D\'iaz, Granet]{Institut f\"ur Informatik, Universit\"at Heidelberg, 69120 Heidelberg, Germany.}
\author[A.~Gir\~ao]{Ant\'onio Gir\~ao}
\email{girao@maths.ox.ac.uk}
\address[Gir\~ao, Kronenberg]{Mathematical Institute, University of Oxford, Andrew Wiles Building, Radcliffe Observatory Quarter, Woodstock Road, Oxford, United Kingdom.}
\author[B.~Granet]{Bertille Granet}
\email{granet@informatik.uni-heidelberg.de}
\author[G.~Kronenberg]{Gal Kronenberg}
\email{kronenberg@maths.ox.ac.uk}

\thanks{Alberto Espuny Díaz was partially funded by the Deutsche Forschungsgemeinschaft (DFG) through projects no.\ 447645533 and 513704762.
Ant\'onio Gir\~ao is supported by ERC Advanced Grant 883810.
Bertille Granet was partially supported by the DFG project 428212407 and by the Alexander von Humboldt Foundation.
Gal Kronenberg is supported by the Royal Commission for the Exhibition of 1851.}

\date{\today}

\begin{document}

\begin{abstract}
A longstanding conjecture of Seymour states that in every oriented graph there is a vertex whose second outneighbourhood is at least as large as its outneighbourhood. 
In this short note we show that, for any fixed $p\in[0,1/2)$, a.a.s.\ every orientation of $G(n,p)$ satisfies Seymour's conjecture (as well as a related conjecture of Sullivan).
This improves on a recent result of Botler, Moura and Naia.
Moreover, we show that $p=1/2$ is a natural barrier for this problem, in the following sense: for any fixed $p\in(1/2,1)$, Seymour's conjecture is actually equivalent to saying that, with probability bounded away from $0$, every orientation of $G(n,p)$ satisfies Seymour's conjecture.
This provides a first reduction of the problem. 

For a second reduction, we consider minimum degrees and show that, if Seymour's conjecture is false, then there must exist arbitrarily large strongly-connected counterexamples with bounded minimum outdegree.
Contrasting this, we show that vertex-minimal counterexamples must have large minimum outdegree.
\end{abstract}

\maketitle

\section{Introduction}\label{sec:intro}

Given any digraph $D$ and a vertex $v\in V(D)$, let us write $N_1(v)\coloneqq\{w\in V\setminus \{v\}:vw\in E(D)\}$ for the \emph{outneighbourhood} of $v$, and let $N_2(v)\coloneqq\{w\in V\setminus (\{v\}\cup N_1(v)) : E(N_1(v),w)\neq\varnothing\}$ denote the \emph{second outneighbourhood} of $v$.
We say that $v$ is a \emph{Seymour vertex} if $|N_2(v)|\geq|N_1(v)|$.
In this note, the word \emph{graph} refers to finite, simple graphs.
Seymour conjectured the following (see~\cite{DL95}).

\begin{conjecture}\label{conj:Seymour}
Every oriented graph has a Seymour vertex.
\end{conjecture}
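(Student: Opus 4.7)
Since \cref{conj:Seymour} is a long-standing open problem, the realistic plan is to outline the most natural attack and pinpoint where it collapses. My plan combines a minimum-counterexample argument with a weighted, Fisher-style inequality.

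First I would fix a hypothetical minimum counterexample $D$, i.e.\ one minimising $|V(D)|$ among oriented graphs with no Seymour vertex. Using the vertex-minimality results promised later in this paper, together with the classical Kaneko--Locke bound, one may assume $\delta^+(D) \geq 7$, strong connectivity, and further structural restrictions. These reductions concentrate attention on a thin class of extremal oriented graphs, ruling out "easy" local structures (pairs of pearls, dominant vertices, small degree configurations, etc.).

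Next I would attempt a Fisher-type weighting: choose a probability distribution $(p_v)_{v \in V(D)}$ and seek a vertex $v$ with
\[
\sum_{w \in N_2(v)} p_w \;\geq\; \sum_{w \in N_1(v)} p_w.
\]
For tournaments, Fisher and Havet--Thomass\'e showed that well-chosen distributions (the uniform one on a suitable convex set, or one arising from a median order) always produce such a vertex. In the oriented setting I would try to transfer this by completing $D$ to a tournament $T \supseteq D$ (for example via a uniformly random orientation of the non-edges), extracting a Seymour vertex of $T$, and then correcting back to $D$ by absorbing the errors through the structural restrictions obtained in the first step.

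The main obstacle, and the reason the conjecture has resisted attack for decades, is the non-edges. In a tournament $\{v\}\cup N_1(v)\cup N_2(v)\cup N_1^-(v)$ covers $V(T)$, which drives essentially every known argument; in a sparse oriented graph a vertex may be at distance at least $3$ from $v$ and thus escape all weightings centred at $v$. Correspondingly, a would-be Seymour vertex in a tournament completion $T$ can lose arbitrarily many of its $N_2$-neighbours once the added edges are removed, and no canonical choice of completion is known to control this loss. Overcoming this seems to require a genuinely new idea, which is precisely why the present paper settles for partial results (verifying the conjecture for random graphs $G(n,p)$ with $p<1/2$ and providing reductions for $p\geq 1/2$ and for minimum-degree behaviour) rather than attacking the full conjecture head-on.
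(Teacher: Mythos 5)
This statement is Seymour's conjecture itself, which the paper does not prove --- it is stated as an open conjecture, and the rest of the paper only establishes partial results and reductions around it. Your write-up correctly recognises this: you never claim to close the gap, and instead outline the standard minimum-counterexample plus Fisher/Havet--Thomass\'e weighting strategy and explain why the completion-to-a-tournament step breaks down (added edges can create $N_2$-neighbours that vanish when the completion is removed, and there is no canonical completion that controls the loss). That diagnosis is accurate, and the background facts you invoke (Kaneko--Locke giving $\delta^+\geq 7$ in a minimal counterexample, strong connectivity of a minimal counterexample, the tournament case of Fisher and Havet--Thomass\'e) are all correct.

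Since there is no proof in the paper to compare against, the only thing to flag is that your text should be presented as a discussion rather than a proof: phrases like ``one may assume \ldots\ and further structural restrictions'' gesture at reductions the paper does establish (\cref{prop:degree-minimal}, strong connectivity of minimal counterexamples), but none of these, individually or combined, are strong enough to make the tournament-completion idea go through, and you should not suggest otherwise. In particular, the paper's own contribution is orthogonal to the Fisher-weighting route: it proves the conjecture for a.a.s.\ $G(n,p)$ with $p<1/2$ via an expansion/ordering argument (\cref{thm:below}), shows an equivalence for $p\in(1/2,1)$ via good orderings (\cref{thm:equiv}), and gives degree-based reductions (\cref{prop:degree-minimal,prop:degree-reduction}); none of these use median orders or tournament completions.
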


Seymour's conjecture, which is closely tied to a particular case of the well-known conjecture of \citet{CH78}, has attracted a lot of attention due to its simplicity, and yet it remains widely open.
The statement has only been proved for some special classes of oriented graphs.
Of particular interest is the case of tournaments, where the conjecture is known to hold: solving a conjecture of Dean~\cite{DL95}, this was first proved by \citet{F96}, and later reproved by \citet{HT00}.
Other special classes of graphs or variants of the conjecture have been considered in different papers (see, e.g.,~\cite{AGGWYZ23,CSY03,FY07,Sea15,CC23,Gha12,Lla13} and the references therein).

\subsection{Orientations of random graphs}

In this note we focus primarily on the case of orientations of random graphs.
The first results in this direction are due to \citet{CGHZ16}, who considered random orientations of the binomial random graph $G(n,p)$ (that is, a graph on $n$ vertices where each possible edge is included independently with probability $p$).
They showed that, when $C_1(\log n/n)^{1/2}\leq p\leq1-C_2(\log n/n)^{1/2}$, a.a.s.\ (that is, with probability tending to $1$ as $n\to\infty$) a random orientation of $G(n,p)$ contains a Seymour vertex.
This implies, in particular, that almost every oriented graph has a Seymour vertex.
A more interesting approach was taken by \citet{BMN22}, who considered \emph{all} orientations of the random graph $G(n,p)$.
They showed that a.a.s.\ \emph{every} orientation of $G(n,p)$ contains a Seymour vertex whenever $p<1/4$.
Here we improve the range of $p$ for which this holds.

\begin{theorem}\label{thm:below}
Let\/ $p<1/2$.
Then, a.a.s.\ every orientation of\/ $G(n,p)$ contains a Seymour vertex.
\end{theorem}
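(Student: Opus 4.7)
The plan is to proceed by contradiction, adapting and sharpening the double-counting argument of Botler, Moura and Naia~\cite{BMN22}. Let $G \sim G(n,p)$ with $p < 1/2$ fixed. First I would establish the standard pseudorandomness of $G$ via Chernoff bounds: a.a.s.\ every degree is $(1 \pm o(1))np$, every pair of vertices has $(1 \pm o(1))np^2$ common neighbours, and for every sufficiently large set $A \subseteq V$ (say $|A| = \Omega(np)$) and $w \notin A$, the edge count $e_G(w,A)$ concentrates around $|A|p$; in particular, the number of $w$ with no $G$-neighbour in $A$ is $o(n)$. These are the only properties of $G$ I would use.

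Suppose for contradiction that some orientation $D$ of $G$ has no Seymour vertex. Pick $v^*$ of maximum out-degree $k = d^+_D(v^*)$; since $\sum_v d^+_D(v) = |E(G)|$, averaging gives $k \ge (1-o(1))np/2$, and also $k \le \Delta(G) \le (1+o(1))np$. Set $A \coloneqq N_1(v^*)$ and $B \coloneqq V \setminus (\{v^*\} \cup A)$. Since $v^*$ is not Seymour, at most $k-1$ vertices of $V \setminus (\{v^*\} \cup A)$ lie in $N_2(v^*)$, so the ``blocked'' set $W$ of vertices $w$ for which every $G$-edge from $w$ to $A$ is oriented as $w \to A$ in $D$ satisfies $|W| \ge n - 2k - o(n)$, after discarding the $o(n)$ vertices with no $G$-neighbour in $A$.

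The core step is a double count of arcs incident to $A$, combining three ingredients. First, the max-out-degree bound $d^+_D(u) \le k$ for every $u \in A$ gives $\sum_{u \in A} d^+_D(u) \le k^2$, so the arcs leaving $A$ in $D$ number at most $k^2 - e_G(A) \approx k^2(1 - p/2)$. Second, by pseudorandomness the arcs from $W$ into $A$ number at least $\sum_{w \in W} e_G(w, A) \ge (1-o(1))|W|kp$. Third, these must fit inside the in-degree budget $\sum_{u \in A} d^-_D(u) \le \sum_{u \in A} d_G(u) = (1+o(1))knp$, after accounting for the $k$ arcs from $v^*$ and the internal $e_G(A) \approx k^2 p/2$. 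Combining these with the bounds on $|W|$ and $k$, and symmetrising the count to include arcs from $A$ into $N^-(v^*)$ (whose blocked vertices also count against $N_2(v^*)$), should yield an inequality whose failure is captured precisely by $p < 1/2$.

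The main obstacle is that the pseudorandom co-degree bound $e_G(w, A) \ge (1-o(1))|A|p$ must hold \emph{uniformly} over the adversarially chosen $A = N_1(v^*)$. A naive union bound over all $k$-subsets of $V$ carries $\binom{n}{k}$ terms and cannot outcompete the Chernoff tail in the regime $p < 1/2$; the fix is likely to restrict attention to $A \subseteq N_G(v^*)$, reducing the count to $\binom{(1+o(1))np}{k}$, which just narrowly closes as $p$ drops below $1/2$ -- this is where the threshold $p = 1/2$ should enter the argument. A secondary technical point is keeping careful track of the contribution of $N_2(v^*) \cap N^-(v^*)$: the arcs from $A$ into $N^-(v^*)$ must be incorporated into the count, as discarding them would cost a constant factor and prevent reaching the full range $p < 1/2$.
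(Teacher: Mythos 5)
Your plan diverges substantially from the paper's proof, which does \emph{not} run a double count around a single max-outdegree vertex.  Instead, the paper orders all vertices $x_1,\dots,x_n$ by decreasing outdegree and splits into two cases: either (i) some $x_i$ has at least $C'\log n$ outneighbours among the \emph{earlier} (higher-outdegree) vertices $X_{i-1}$ -- in which case a short calculation using $e(A,B)\leq (1-\eps)|A||B|/2$ shows $x_i$ is already a Seymour vertex -- or (ii) every vertex has $O(\log n)$ ``backward'' outneighbours, in which case a vertex $y$ among the first $\delta n$ can be shown (using the same key property) to reach all but $O(\delta n)+O(\log n)$ vertices in two steps, while $|N_1(y)|\le (1-\eps)n/2$, forcing $|N_2(y)|\ge |N_1(y)|$.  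The only place the threshold $p=1/2$ is used is in the deterministic fact that $|N_1(y)|\le d_G(y)\le (1-\eps)n/2$, not in any union bound.

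There are two concrete gaps in your version. First, the mechanism you propose for where $p=1/2$ enters is not sound. You worry that a union bound over all $k$-subsets $A\subseteq N_G(v^*)$ ``just narrowly closes as $p$ drops below $1/2$.''  It does not: for $k\approx np/2$ there are roughly $2^{np}$ such $A$, while the per-vertex Chernoff bound for $e_G(w,A)=(1\pm o(1))kp$ only gives a tail of $\nume^{-\Theta(np^2)}$, so the union bound would require $p$ to exceed some large constant, not $p<1/2$. (The estimates you actually \emph{use} -- that $e_G(W,A)\ge (1-o(1))|W|kp$ for the whole set $W$, and that only $o(n)$ vertices have no $G$-neighbour in a fixed linear-size $A$ -- do go through for all constant $p$, but then the threshold $1/2$ cannot come from this side of the argument.)

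Second, and more seriously, the double count as sketched does not yield a contradiction.  Setting $k=|A|$ and $|W|\ge n-2k-o(n)$ and plugging your three ingredients into the in-degree budget gives, after dividing by $kp$,
\[
  |W| + \tfrac{1}{p} + \tfrac{k}{2} \;\le\; (1+o(1))\,n,
\]
which reduces to $-\tfrac{3k}{2}\le o(n)$, a tautology.  Strengthening the ``arcs out of $A$'' bound via $|N_2(v^*)|<k$ and the pairwise density estimate (arcs out of $A$ are at most $(1+o(1))k^2p$, since all of them land in $N_2(v^*)\cup\{v^*\}$) still leaves both sides consistent: one gets $\text{arcs in}\ge (1-o(1))knp-(2+o(1))k^2p$ and $\text{arcs in}\le (1+o(1))knp-(1-o(1))k^2p/2$, with no conflict for any $p\in(0,1)$.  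The ``symmetrisation'' over $N^-(v^*)$ does not help either, since the blocked vertices of $N^-(v^*)$ are already a subset of $W$.  Some additional structural idea is needed -- and the paper supplies it in the form of the outdegree ordering, which guarantees that every outneighbour of the chosen vertex has outdegree at least as large, a constraint your single max-outdegree vertex $v^*$ does not provide for all of $A$.
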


Our methods also apply to a related conjecture of Sullivan \cite{Sullivan}; see \cref{app:Sullivan}.

Moreover, it turns out that $p=1/2$ is a natural barrier for this problem.
Indeed, we show that, if we could extend \cref{thm:below} to any value of $p>1/2$, then we would prove \cref{conj:Seymour} in full generality.
This means that proving \cref{conj:Seymour} in general is as hard as proving that it holds for the random graph $G(n,p)$ for $p\in(1/2,1)$, providing a first reduction of the problem.

\begin{theorem}\label{thm:equiv}
    Let\/ $p\in(1/2,1)$.
    If \cref{conj:Seymour} is false, then a.a.s.\ there is an orientation of\/ $G(n,p)$ with no Seymour vertex.
\end{theorem}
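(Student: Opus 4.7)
Assume that Conjecture~\ref{conj:Seymour} is false, so that there exists a counterexample $H$ on $k$ vertices, with $\delta := \min_{i \in V(H)} (|N_1^H(i)|-|N_2^H(i)|) \geq 1$. By Fisher's theorem $H$ is not a tournament, so $U(H)$ contains non-edges. The plan is to build, a.a.s., an orientation of $G(n,p)$ that is a ``random sparsification'' of a blow-up of $H$, and to verify it is Seymour-free.

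First, partition $V(G(n,p)) = V_1 \cupdot \cdots \cupdot V_k$ by placing each vertex independently and uniformly into one of the $k$ classes; by Chernoff, $|V_i|=(1+o(1))n/k$ for every $i$, a.a.s. Then, orient each edge $xy \in E(G(n,p))$ with $x \in V_i$, $y \in V_j$ by following $H$ whenever possible: if $ij \in E(H)$ orient $x \to y$, while if $i = j$ or $\{i,j\}$ is a non-edge of $U(H)$ the edge is \emph{free} and is oriented by an auxiliary deterministic rule (e.g.\ respecting a fixed linear order of $V(G(n,p))$). The forced orientation mirrors the blow-up of $H$, which is itself Seymour-free with a margin of at least $\delta n/k$ between $|N_1|$ and $|N_2|$; the task is to show that the free edges do not destroy this margin.

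For each vertex $x \in V_i$, the first neighbourhood $|N_1(x)|$ is (conditional on the partition) a sum of independent Bernoullis and concentrates around its mean by Chernoff. For the second neighbourhood, one first shows that a.a.s.\ every $y \in V_\ell$ with $\ell \in N_2^H(i)$ already lies in $N_2(x)$ via a \emph{forced} length-two path (there being many possible intermediate vertices $z$ with $xz, zy \in G(n,p)$), contributing $(1+o(1))(n/k)|N_2^H(i)|$ to $|N_2(x)|$. The remaining length-two paths, which use at least one free edge, must then be controlled by the choice of auxiliary rule. A union bound over the $n$ vertices of $G(n,p)$ finishes the argument.

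The main obstacle is precisely this last point: bounding the additional contributions to $|N_2(x)|$ from length-two paths traversing free edges. Since $\sum_i(|N_1^H(i)|-|N_{-1}^H(i)|) = 0$, some vertices of $H$ have small in--out imbalance, at which the margin $\delta n/k$ barely suffices. This is where the hypothesis $p > 1/2$ is used: it guarantees that the density of \emph{forced} edges is high enough---and the density of free edges correspondingly restricted---to keep the number of spurious length-two paths below the margin. Carrying out this bookkeeping at (or near) the critical threshold $p=1/2$, which is sharp in view of Theorem~\ref{thm:below}, is the technical heart of the proof.
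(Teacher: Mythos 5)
Your construction is fundamentally different from the paper's, and unfortunately it has a gap that is more serious than the one you acknowledge. In a random sparsification of a blow-up, the first out-neighbourhood of $x\in V_i$ shrinks by a factor of roughly $p$: the forced out-edges from $x$ go to $\approx p\,(n/k)\,|N_1^H(i)|$ vertices, since each potential edge is present only with probability $p$. The second out-neighbourhood, however, does \emph{not} shrink by a comparable factor: as you note yourself, a.a.s.\ essentially every $w\in V_\ell$ with $\ell\in N_2^H(i)$ lies in $N_2(x)$, because there are $\Theta(n)$ candidate intermediate vertices $z$. So already from the forced edges alone, $|N_1(x)|\approx p\,(n/k)|N_1^H(i)|$ while $|N_2(x)|\gtrsim (n/k)|N_2^H(i)|$. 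For $x$ not to become a Seymour vertex one would need $p\,|N_1^H(i)|>|N_2^H(i)|$ for every $i$, which is not a consequence of $H$ being a counterexample (that only gives $|N_1^H(i)|>|N_2^H(i)|$), and fails already for, say, $p=0.6$ and a vertex with $|N_1^H(i)|=10$, $|N_2^H(i)|=9$. In other words, the difficulty is not only in controlling the free edges; the forced part of the construction already breaks, so the blow-up approach does not work for any fixed $p<1$.

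The paper takes a quite different route. It plants a single \emph{induced} copy of the underlying graph $H$ of the counterexample $D$ on vertices $x_1,\ldots,x_h$, and then continues the ordering so that every later vertex $x_i$ has at least $i/2$ neighbours among $x_1,\ldots,x_{i-1}$. All edges outside the planted copy are oriented towards the smaller label. This way $x_1,\ldots,x_h$ have their out-neighbourhoods determined entirely by $D$ and hence are not Seymour vertices, while for $i>h$ the set $N_1(x_i)\cup N_2(x_i)$ is contained among the $i-1$ predecessors of $x_i$, and since $|N_1(x_i)|\geq i/2>(i-1)/2$, the vertex $x_i$ cannot be a Seymour vertex either. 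The hypothesis $p>1/2$ enters only to build the ordering (a.a.s.\ every vertex has degree exceeding $n/2$, and a greedy choice works); no sparsification issue arises because only one induced copy of $H$ is needed, not a blow-up.
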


\subsection{Counterexamples and minimum degrees}

We show that, if \cref{conj:Seymour} is false, then a vertex-minimal counterexample must have high minimum outdegree. 

\begin{proposition}\label{prop:degree-minimal}
    Let $D$ be a vertex-minimal counterexample to \cref{conj:Seymour}.
    Then, $\delta^{+}(D)>\sqrt{|V(D)|}$.
\end{proposition}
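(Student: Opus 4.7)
My plan is to argue by contradiction: suppose $D$ is a vertex-minimal counterexample to \cref{conj:Seymour} on $n\coloneqq|V(D)|$ vertices and there exists $v\in V(D)$ with $d^+(v)\le\sqrt{n}$. Write $k\coloneqq d^+(v)$.

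The first step is a deletion reduction: applying vertex-minimality to $D-v$ yields a Seymour vertex $u$ of $D-v$. Using that $N_2^{D-v}(u)\subseteq N_2^D(u)$ and $|N_1^D(u)|\le|N_1^{D-v}(u)|+1$, a short case analysis produces two crucial facts. First, $u\to v$ in $D$: otherwise $u$ would already be a Seymour vertex of $D$, a contradiction. Second, $|N_2^D(u)|=|N_1^D(u)|-1$ and $N_2^D(u)=N_2^{D-v}(u)$, so every $z\in N_2^D(u)$ has an in-neighbour in $N_1^D(u)\setminus\{v\}$. Translating this structurally: since $v\in N_1(u)$ and $v\not\to u$ (by the orientation), each $z\in N^+(v)$ lies in $N_1(u)\cup N_2(u)$, and iterating with $x\in N_1(u)$ in place of $v$ shows $N^+(x)\subseteq N_1(u)\cup N_2(u)$ for every $x\in N_1(u)$. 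Writing $d\coloneqq d^+(u)\ge k$, this confines all out-edges from the $d$ vertices of $N_1(u)$ to a set of size $2d-1$.

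Finally, I close the argument by double-counting inside this closed set: summing out-degrees gives at least $kd$ edges from $N_1(u)$, while the edges within $N_1(u)$ are bounded by $\binom{d}{2}$ and the edges to $N_2(u)$ are controlled by the refined in-degree information (each $z\in N^+(v)\cap N_2(u)$ has at least two in-neighbours in $N_1(u)$, one being $v$ itself and one from the redundancy). Combined with $k\le\sqrt{n}$, this should yield a contradiction. The main obstacle is making the last counting step sharp enough to reach the $\sqrt{n}$ threshold, as naive estimates only give $k=O(d)$; I expect it may be necessary to apply vertex-minimality a second time to the proper subdigraph $D[\{u\}\cup N_1(u)\cup N_2(u)]$, whose Seymour vertex must (by the closure property above) lie in $N_2(u)$ and have an out-neighbour outside this set, to furnish the extra leverage needed.
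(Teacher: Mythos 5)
Your opening observations are correct: if $v$ has minimum outdegree, then in $D-v$ a Seymour vertex $u$ must satisfy $u\to v$, $|N_2^D(u)|=|N_1^D(u)|-1$, and $N_2^D(u)=N_2^{D-v}(u)$; and consequently all out-arcs from $N_1(u)$ land in $N_1(u)\cup N_2(u)$. However, the argument does not close, and the gap is more serious than a ``sharpness'' issue. Double-counting arcs from $N_1(u)$ into $N_1(u)\cup N_2(u)$ only yields $k=O(d)$ with $d=d^+(u)$, and nothing in your setup bounds $d$ --- it can be as large as $n/2$ (when $\{u\}\cup N_1(u)\cup N_2(u)=V(D)$), so no contradiction with $k\le\sqrt n$ follows. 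Your proposed remedy, invoking minimality a second time on $D[\{u\}\cup N_1(u)\cup N_2(u)]$, has two defects: first, you have not shown this induced subdigraph is proper; second, and more importantly, the set is not ``closed'' --- out-arcs from $N_2(u)$ can escape it --- so a Seymour vertex $w$ of the subdigraph may lie in $N_2(u)$, in which case $N_1^D(w)$ may be strictly larger than its restriction to the subdigraph and $w$ need not be a Seymour vertex of $D$. You correctly note that $w\in\{u\}\cup N_1(u)$ would give a Seymour vertex of $D$, but you cannot rule out $w\in N_2(u)$.

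The paper's proof avoids both problems by a different mechanism. Starting from the minimum-outdegree vertex $x$, it maintains a tripartition $(A_i,X_i,B_i)$ with $E(A_i,B_i)=\varnothing$, and repeatedly absorbs into $A$ any subset $X_i'\subseteq X_i$ violating a Hall-type condition $|N_1(X_i')\cap B_i|\ge|X_i'|$, adding the newly exposed out-neighbours in $B_i$ to $X$. Since $|X_1|<d\coloneqq\delta^+(D)$ and each step strictly shrinks $X$, the process halts after fewer than $d$ steps with $B_t$ large (here is exactly where $d\le\sqrt n$ enters: $|B_t|\ge n-\binom{d}{2}\ge 1$, so $A_t$ is a proper subdigraph) and with Hall's condition now holding for every subset of $X_t$. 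Minimality gives a Seymour vertex $z$ of $D[A_t]$, and the restored Hall condition $|N_1(N_1(z)\cap X_t)\cap B_t|\ge|N_1(z)\cap X_t|$ is precisely what is needed to ``recover'' the out-neighbours of $z$ that were lost in passing to $D[A_t]$, so that $z$ is a Seymour vertex of $D$ itself. This Hall-type absorption is the missing idea in your sketch; without it one cannot control the escape from $N_2(u)$ nor guarantee a proper subdigraph, which are exactly the points where your argument stalls.
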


On the other hand, the following result essentially says that, if there exist any counterexamples to \cref{conj:Seymour}, then there must exist arbitrarily large strongly-connected counterexamples with bounded minimum outdegree. This gives a second reduction of Seymour's conjecture.

\begin{proposition}\label{prop:degree-reduction}
    Suppose \cref{conj:Seymour} is false.
    Then, for every function $d=d(n)=\omega(1)$, there exist infinitely many $n\in\mathbb{N}$ for which there exists an $n$-vertex strongly-connected oriented graph $D$ with $\delta^+(D)<d$ which contains no Seymour vertex.
\end{proposition}

\section{Proofs}

Our notation will be standard.
For any positive integer $n$, we write $[n]\coloneqq\{1,\ldots,n\}$.
Throughout, for the sake of readability, we assume that $n$ is sufficiently large when needed and ignore rounding for asymptotic statements.
Given a graph $G=(V,E)$, for any disjoint sets $A,B\subseteq V$ we write $e(A,B)$ for the number of edges with one endpoint in $A$ and the other in $B$.
Given an orientation $\vec{G}$ of $G$, we will write $\vec{e}(A,B)$ to denote the number of (oriented) edges from $A$ to $B$.
Moreover, for any vertex $v\in V$, we write $d^+(v)\coloneqq|N_1(v)|$ for the outdegree of $v$ in $\vec{G}$.
Given a set $A\subseteq V(D)$, we define $N_1(A)\coloneqq(\bigcup_{v\in A}N_1(v))\setminus A$.
The graphs or oriented graphs to which the notation refers will always be clear from the context.

\subsection{Orientations of random graphs}

We will use the following version of Chernoff's bound (see, e.g.,~\cite[Corollary~2.3]{JLR}).

\begin{lemma}\label{lem:Chernoff}
Let\/ $X\sim\mathrm{Bin}(n,p)$ be a binomial random variable.
Then, for all\/ $0<\delta<1$ we have that\/
$\mathbb{P}[|X-np|\geq\delta np]\leq2\nume^{-\delta^2np/3}$.
\end{lemma}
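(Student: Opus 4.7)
The plan is to follow the standard Chernoff-style argument via the exponential moment method, which is the approach used in the cited reference. First, I would treat the upper and lower tails separately and then combine them via a union bound to obtain the factor of $2$ on the right-hand side. Since $X\sim\mathrm{Bin}(n,p)$ can be written as a sum of i.i.d.\ Bernoulli random variables, its moment generating function satisfies $\mathbb{E}[\nume^{tX}]=(1-p+p\nume^t)^n$, and the elementary inequality $1+x\leq \nume^x$ (applied to $x=p(\nume^t-1)$) gives the clean bound $\mathbb{E}[\nume^{tX}]\leq \nume^{np(\nume^t-1)}$.

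For the upper tail, I would apply Markov's inequality to $\nume^{tX}$ with $t>0$ to obtain
\[
\mathbb{P}[X\geq(1+\delta)np]\leq \nume^{-t(1+\delta)np}\,\mathbb{E}[\nume^{tX}]\leq \exp\bigl(np(\nume^t-1-t(1+\delta))\bigr),
\]
and then optimise by setting $t=\ln(1+\delta)$. This yields the classical bound $\mathbb{P}[X\geq(1+\delta)np]\leq \bigl(\nume^{\delta}(1+\delta)^{-(1+\delta)}\bigr)^{np}$. The symmetric argument with $t<0$ gives the analogous estimate for the lower tail $\mathbb{P}[X\leq(1-\delta)np]\leq \bigl(\nume^{-\delta}(1-\delta)^{-(1-\delta)}\bigr)^{np}$.

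The final step, and the only mildly technical one, is the analytic inequality
\[
\nume^{\delta}(1+\delta)^{-(1+\delta)}\leq \nume^{-\delta^2/3}\qquad\text{and}\qquad \nume^{-\delta}(1-\delta)^{-(1-\delta)}\leq \nume^{-\delta^2/3}
\]
valid for all $0<\delta<1$. I would verify these by defining $f(\delta)\coloneqq\delta-(1+\delta)\ln(1+\delta)+\delta^2/3$, noting $f(0)=0$, and checking $f'(\delta)\leq 0$ on $[0,1)$ via a short Taylor expansion of $\ln(1+\delta)$; the lower-tail inequality is handled identically (in fact it holds with the sharper exponent $\delta^2/2$, but $\delta^2/3$ suffices). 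Multiplying each tail bound through by $np$ in the exponent and summing the two tails produces the claimed $2\nume^{-\delta^2 np/3}$. I do not anticipate any real obstacle here; the only care point is keeping the sign conventions straight in the lower-tail optimisation, after which the proof is essentially mechanical.
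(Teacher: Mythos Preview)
Your proposal is correct and follows the standard Chernoff--Hoeffding argument. One minor point: your claim that $f'(\delta)\leq 0$ on $[0,1)$ is true but does not follow from ``a short Taylor expansion'' alone (the second derivative changes sign at $\delta=1/2$); a cleaner verification is to note that $f'(0)=0$, $f''(\delta)=-\tfrac{1}{1+\delta}+\tfrac{2}{3}$ is negative on $[0,1/2)$ and positive on $(1/2,1)$, and $f'(1)=-\ln 2+\tfrac{2}{3}<0$, so $f'\leq 0$ throughout.

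That said, the paper does not prove this lemma at all: it is stated without proof and simply referenced as a known result (Corollary~2.3 in Janson, {\L}uczak and Ruci\'nski). So your proposal supplies strictly more than the paper does; the paper's ``proof'' is just a citation.
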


We begin with the proof of \cref{thm:below}.

\begin{proof}[Proof of \cref{thm:below}]
Consider $G(n,p)$.
By the result of \citet[Theorem~2]{BMN22}, we may assume, e.g., that $p\geq1/8$ (our proof works for any constant $p\in(0,1/2)$).
We will need some probabilistic estimates. 
Let $\eps\coloneqq1/2-p$, so $\eps\in(0,3/8]$.
Let $\delta\coloneqq\eps/3$.
Let $C=C(\eps)$ be sufficiently large, and let $C'\coloneqq2C/\eps$. 
We will use the following properties.

\begin{claim}\label{claim1}
A.a.s.\ $G\sim G(n,p)$ satisfies the following properties:
\begin{enumerate}[label=$(\mathrm{\alph*})$]
    \item\label{claim1item1} Every vertex\/ $v\in V(G)$ satisfies that\/ $d(v)\leq (1-\eps)n/2$. 
    \item\label{claim1item3} For every pair of disjoint sets\/ $A,B\subseteq V(G)$ with\/ $|A|,|B|\geq C\log n$ we have that
    \[(p-\delta)|A||B|\leq e(A,B)\leq(1-\eps)|A||B|/2.\]
\end{enumerate}
\end{claim}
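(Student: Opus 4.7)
The plan is to combine \cref{lem:Chernoff} with a union bound for each part; the only real work is in tuning the constant $C=C(\eps)$ so that the union bound in (b) closes.

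For part (a), I would fix a vertex $v$ and observe that $d(v)\sim\mathrm{Bin}(n-1,p)$, so the required inequality $d(v)\le (1-\eps)n/2=(p+\eps/2)n$ rules out a deviation of order $\eps n$ from the mean $p(n-1)$. \cref{lem:Chernoff} (or a standard variant for relative deviations exceeding $1$, which may occur for small $p$) gives failure probability at most $\exp(-\Omega(n))$, and a union bound over the $n$ vertices finishes this part.

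For part (b), the plan is to handle a fixed pair first and then union over pairs. Fix disjoint $A,B\subseteq V(G)$ of sizes $a,b\ge C\log n$ and write $e(A,B)\sim\mathrm{Bin}(ab,p)$. Using $(1-\eps)/2 = p+\eps/2$ and $\delta=\eps/3$, both failure events correspond to deviations of a fixed fraction of $pab$ from the mean $pab$, so \cref{lem:Chernoff} bounds the probability that the pair fails by $\exp(-c\eps^2 ab)$ for some absolute $c>0$. The number of pairs of disjoint sets of sizes $a,b$ is at most $\binom{n}{a}\binom{n}{b}\le \exp(2(a+b)\log n)$, so a union bound gives total failure probability at most
\[
\sum_{a,b\ge C\log n}\exp\bigl(2(a+b)\log n-c\eps^2 ab\bigr).
\]
Since $ab\ge C(\log n)(a+b)/2$ throughout this range, choosing $C=C(\eps)$ sufficiently large makes each exponent at most $-(a+b)\log n$, and the sum is $o(1)$.

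The main subtlety is that the Chernoff decay $\exp(-c\eps^2 ab)$ must dominate the entropy term $\exp(2(a+b)\log n)$ \emph{uniformly} over all admissible $a,b$; this is precisely why the hypothesis $|A|,|B|\ge C\log n$ with $C$ large in terms of $\eps$ appears in the statement. Everything else is routine.
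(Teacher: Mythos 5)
Your proposal is correct and follows essentially the same route as the paper: Chernoff plus a union bound for (a), and Chernoff plus a union bound over pairs $(A,B)$ for (b), with the constant $C(\eps)$ tuned so that $ab\gtrsim (a+b)\log n$ kills the entropy term. The only cosmetic difference is that the paper proves the two-sided estimate $e(A,B)=(p\pm\delta)|A||B|$ (noting $p+\delta\le (1-\eps)/2$), which keeps the relative deviation $\delta/p$ below $1$ so the stated \cref{lem:Chernoff} applies directly, whereas you apply it to the two one-sided events with slightly different deviation parameters.
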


\begin{claimproof}
    Property \ref{claim1item1} is standard, and follows by a direct application of \cref{lem:Chernoff} and a union bound over all vertices.\COMMENT{For any fixed vertex $v$ we have that
    \begin{align*}
        \mathbb{P}[d(v)> (1-\eps)n/2]=\mathbb{P}[d(v)> (p+\eps/2)n]\leq\mathbb{P}[d(v)\neq (p\pm\eps/2)n]\leq\mathbb{P}\left[d(v)\neq\left(1\pm\frac{101\eps}{200p}\right)\mathbb{E}[d(v)]\right]\leq\nume^{-\Theta(n)}.
    \end{align*}
    The conclusion follows by a union bound over all vertices.}
    Property \ref{claim1item3} follows similarly, though the union bound is a bit more involved, so we include the details.
    
    It suffices to show that a.a.s.\ $e(A,B)=(p\pm\delta)|A||B|$ for all the required pairs of sets.
    Fix any disjoint sets $A,B\subseteq V(G)$ with $|A|,|B|\geq C\log n$.
    By \cref{lem:Chernoff}, we have that
    \[\mathbb{P}[e(A,B)\neq(p\pm\delta)|A||B|]=\mathbb{P}[e(A,B)\neq(1\pm\delta/p)p|A||B|]\leq2\exp(-\delta^2|A||B|/3p).\]
    We may assume without loss of generality that $|A|\leq|B|$.
    By a union bound over all choices of~$A$ and~$B$, the probability $P$ that the statement fails satisfies
    \begin{align*}
        P&\leq2\sum_{a=C\log n}^{n/2}\sum_{b=a}^n\binom{n}{a}\binom{n}{b}\exp\left(-\frac{\delta^2}{3p}ab\right)\\
        &\leq\sum_{a=C\log n}^{n/2}\sum_{b=a}^nn^an^b\exp\left(-\frac{\delta^2}{3p}ab\right)=\sum_{a=C\log n}^{n/2}\sum_{b=a}^n\exp\left((a+b)\log n-\frac{\delta^2}{3p}ab\right).
    \end{align*}
    By making the constant $C$ sufficiently large,\COMMENT{It suffices to take $C\geq12p/\delta^2$} we may ensure that $a\log n,b\log n\leq\delta^2ab/12p$, and so
    \[P\leq\sum_{a=C\log n}^{n/2}\sum_{b=a}^n\exp\left(-\frac{\delta^2}{6p}ab\right)\leq\Theta(n^2)n^{-\Theta(\log n)}=o(1).\qedhere\]
\end{claimproof}

Now, let $G$ be any $n$-vertex graph satisfying the properties of \cref{claim1}, and let $\vec{G}$ be an arbitrary orientation of $G$. 
We are going to show that, if $n$ is sufficiently large, then $\vec{G}$ contains a Seymour vertex.
Let $x_1,\ldots,x_n$ be a labelling of $V(G)$ such that for all $i,j\in[n]$ with $i\leq j$ we have that $d^{+}(x_i)\geq d^{+}(x_j)$; that is, the labels are assigned by decreasing order of the outdegrees of the vertices.
Let $X_0\coloneqq\varnothing$ and, for each $i\in[n]$, let $X_i\coloneqq\{x_j:j\in[i]\}$.

\begin{claim}\label{claim2}
If there is some\/ $i\in[n]$ such that\/ $|N_1(x_i)\cap X_{i-1}|\geq C'\log n$, then\/ $x_i$ is a Seymour vertex.
\end{claim}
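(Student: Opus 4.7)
My plan is a double-counting argument based on the outdegrees of the vertices in $A\coloneqq N_1(x_i)\cap X_{i-1}$. Write $k\coloneqq d^+(x_i)$ and $a\coloneqq|A|$; by hypothesis $a\geq C'\log n$, and by the ordering of the labelling every vertex $v\in X_{i-1}$ satisfies $d^+(v)\geq k$, so $\sum_{v\in A}d^+(v)\geq ak$. Next, I classify each directed edge with tail in $A$ by the location of its head. Since $A\subseteq N_1(x_i)$, the definition of $N_2(x_i)$ forces every such edge to have its head in $A$, in $N_1(x_i)\setminus A$, or in $N_2(x_i)$ (it cannot point to $x_i$ as this would contradict $x_i\to A$). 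Counting in the underlying graph, this yields
\[ak \leq \binom{a}{2} + e(A,N_1(x_i)\setminus A) + e(A,N_2(x_i)).\]

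The key step is to apply property~\ref{claim1item3} of \cref{claim1} to bound each of the last two terms by $(1-\eps)a|S|/2$ whenever the relevant set $S$ has size at least $C\log n$. In the generic case where both $|N_1(x_i)\setminus A|\geq C\log n$ and $|N_2(x_i)|\geq C\log n$, plugging in these bounds and dividing by $a$ gives, after rearranging,
\[|N_2(x_i)| \geq \frac{k(1+\eps)-\eps a+1}{1-\eps} \geq \frac{k+1}{1-\eps} > k,\]
where the second inequality uses $a\leq k$. This is exactly the Seymour condition for $x_i$.

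It remains to handle the boundary cases in which one of $|N_1(x_i)\setminus A|$ or $|N_2(x_i)|$ is smaller than $C\log n$; here I replace the corresponding edge count by the trivial bound $e(A,S)\leq a|S|<aC\log n$. A direct computation in each sub-case should either recover $|N_2(x_i)|\geq k$ directly, or produce an upper bound of the form $k=O(C\log n)$ which contradicts $k\geq a\geq C'\log n$, since the choice $C'=2C/\eps$ and the upper bound $\eps\leq 3/8$ ensure that $C'$ is substantially larger than $C$. The main obstacle is essentially bookkeeping: verifying that the constants match up across all (at most four) sub-cases, so that every inequality yields either the Seymour condition for $x_i$ or an honest contradiction with the lower bound on $a$.
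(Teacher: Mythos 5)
Your proposal is correct; it applies the same two key inputs as the paper (the outdegree ordering giving $d^+(v)\geq k$ for $v\in A$, and the pseudo-random edge-count property from \cref{claim1}~\ref{claim1item3}), but the decomposition is different. The paper splits the \emph{vertices} of $A$ into a small set $S$ (those with $\geq(1+\eps)m/2$ out-edges into $N_1(x_i)$) and a large complementary set $T$, shows $|S|<C\log n$ via one application of \ref{claim1item3}, and then sandwiches $\vec{e}(T,N_2(x_i))$ between $|T|(1-\eps)m/2$ and $(1-\eps)|T||N_2(x_i)|/2$. In particular, the lower bound on $\vec{e}(T,N_2(x_i))$ gives $|N_2(x_i)|\geq(1-\eps)m/2\geq C\log n$ for free, so the paper's second application of \ref{claim1item3} is automatically justified with no casework. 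You instead split the \emph{out-edges} of $A$ by destination and solve the resulting inequality directly for $|N_2(x_i)|$. This is cleaner in the generic case, but it forces you into boundary cases when $|N_1(x_i)\setminus A|$ or $|N_2(x_i)|$ is below $C\log n$. Those cases do close up as you sketch: when only $|N_1(x_i)\setminus A|<C\log n$ you still get $|N_2(x_i)|\geq(k+1-2C\log n)/(1-\eps)\geq k$ using $\eps k\geq\eps C'\log n=2C\log n$; when $|N_2(x_i)|<C\log n$ the inequality forces $k<4C\log n$, contradicting $k\geq a\geq C'\log n\geq(16/3)C\log n$ (since $\eps\leq 3/8$). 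So both arguments are valid; the paper's vertex-based $S/T$ split is arranged precisely to avoid the bookkeeping you defer to the end.
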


\begin{claimproof}
Assume there is some $i\in[n]$ satisfying the condition.
Let $A\coloneqq N_1(x_i)\cap X_{i-1}$, so $|A|\geq C'\log n$.
Let $m\coloneqq|N_1(x_i)|$ (and note that $m\geq|A|$).
Let $S\subseteq A$ be the set of all vertices $v\in A$ with $\vec{e}(v,N_1(x_i))\geq(1+\eps)m/2$, and let $T\subseteq A$ be the set of all vertices $v\in A$ with $\vec{e}(v,V\setminus (\{x_i\}\cup N_1(x_i)))\geq(1-\eps)m/2$.
Note that, by definition,
\begin{equation}\label{eq:bound1}
    \vec{e}(T,N_2(x_i))=\vec{e}(T, V\setminus (\{x_i\}\cup N_1(x_i)))\geq|T|(1-\eps)m/2.
\end{equation}
Observe that $|S|<C\log n$ (indeed, if we assume otherwise, then for any set $S'\subseteq S$ of size $|S'|=C\log n$ we have that\COMMENT{For the third inequality, we use the fact that $C\log n=\eps C'\log n/2\leq\eps m/2$ by the definition of $C'$.}
\[e(S',N_1(x_i)\setminus S')\geq\sum_{v\in S'}\vec{e}(v,N_1(x_i)\setminus S')\geq |S'|((1+\eps)m/2-C\log n)\geq |S'|m/2,\]
which contradicts \cref{claim1}~\ref{claim1item3}).\COMMENT{Note that, by assumption, $|S'|\geq C\log n$, and we also have $|N_1(x_i)\setminus S'|\geq (C'-C)\log n\geq C\log n$, so \cref{claim1}~\ref{claim1item3} applies.}
Since $A\setminus S\subseteq T$, this implies that $|T|\geq C\log n$.\COMMENT{Indeed, we have that $A\setminus S\subseteq T$, since every vertex $v\in A$ satisfies $d^+(v)\geq m$ by the definition of $A$, and any such $v$ with $\vec{e}(v,N_1(x_i))<(1+\eps)m/2$ must therefore satisfy $\vec{e}(v,V\setminus (\{x_i\}\cup N_1(x_i)))\geq m-((1+\eps)m/2-1)-1=(1-\eps)m/2$.
Therefore, $|T|\geq C'\log n-C\log n\geq C\log n$.}
It thus follows from \cref{claim1}~\ref{claim1item3} that\COMMENT{Formally, to apply \cref{claim1}~\ref{claim1item3} we must have $|N_2(x_i)|\geq C\log n$. 
We have not proved this explicitly, but this is a consequence of \eqref{eq:bound1}. 
Indeed, we just showed that $|T||N_2(x_i)|\geq\vec{e}(T,N_2(x_i))\geq|T|(1-\eps)m/2$, so $|N_2(x_i)|\geq(1-\eps)m/2\geq C\log n$.
(The last bound holds since $m\geq C'\log n$ and by the definition of $C'$.
Note that $T$ and $N_2(x_i)$ are disjoint by definition.)}
\begin{equation}\label{eq:bound2}
    \vec{e}(T,N_2(x_i))\leq(1-\eps)|T||N_2(x_i)|/2.
\end{equation}
Combining \eqref{eq:bound1} and \eqref{eq:bound2}, it follows that $x_i$ is a Seymour vertex.
\end{claimproof}

By \cref{claim2}, we may assume that $\vec{G}$ satisfies that 
\begin{equation}\label{eq:keyprop}
\text{for all $i\in[n]$ we have $|N_1(x_i)\cap X_{i-1}|\leq C'\log n$.}
\end{equation} 
Now let $B\coloneqq X_{2\delta n}$ be partitioned into $B_1\coloneqq X_{\delta n}$ and $B_2\coloneqq B\setminus B_1$.
By \cref{claim1}~\ref{claim1item3}, we have that $e(B_1,B_2)\geq(p-\delta)|B_1||B_2|=(p-\delta)\delta^2 n^2$.
By \eqref{eq:keyprop}, for each $v\in B_2$ we have $\vec{e}(v,B_1)\leq C'\log n$, and so $\vec{e}(B_1,B_2)\geq(p-2\delta)\delta^2n^2$.
It follows by averaging that there is some vertex $y\in B_1$ such that $\vec{e}(y,B_2)\geq(p-2\delta)\delta n$.
Let $Y\coloneqq N_1(y)\cap B_2$, and let $Z$ be the set of vertices $z\in V\setminus B$ such that $\vec{e}(Y,z)=0$.

We claim that $|Z|<C\log n$.
Indeed, assume that $|Z|\geq C\log n$, let $Z'\subseteq Z$ with $|Z'|=C\log n$, and partition $Y$ arbitrarily into $\Theta(n/\log n)$ sets of size at least $C\log n$.
By \cref{claim1}~\ref{claim1item3}, there are edges of $G$ between $Z'$ and each of these sets, so together $e(Y,Z')=\Omega(n/\log n)$.
By averaging, there must be some vertex $z\in Z'$ with $e(Y,z)=\Omega(n/\log^2n)$.
Since $\vec{e}(Y,z)=0$ by definition, we must have that $\vec{e}(z,Y)=\Omega(n/\log^2n)$.
However, this contradicts \eqref{eq:keyprop}.

Now, since $V\setminus (B\cup Z)\subseteq N_1(y)\cup N_2(y)$, it follows that $|N_1(y)\cup N_2(y)|\geq (1-3\delta)n$.
By \cref{claim1}~\ref{claim1item1}, we have that $|N_1(y)|\leq(1-\eps)n/2$, and hence $|N_2(y)|\geq (1-3\delta)n-(1-\eps)n/2\geq|N_1(y)|$, so $y$ is a Seymour vertex.
\end{proof}

Next, we prove \cref{thm:equiv}.

\begin{proof}[Proof of \cref{thm:equiv}]
    Let $D$ be an oriented graph for which \cref{conj:Seymour} fails, and let $H$ be its underlying graph.
    Let $h\coloneqq|V(H)|$.
    We say that a graph $G$ on $n\geq h$ vertices has a \emph{good ordering} if there is a labelling $x_1,\ldots, x_n$ of its vertices satisfying the following properties:
    \begin{enumerate}[label=(P\arabic*)]
        \item\label{goodordering1} $G[\{x_1,\ldots,x_h\}]$ induces a copy of $H$, and
        \item\label{goodordering2} for every $i\in\{h+1,\ldots,n\}$ we have $|N(x_i)\cap\{x_1,\ldots,x_{i-1}\}|\geq i/2$.
    \end{enumerate}
    
    We claim that, if $G$ has a good ordering $x_1,\ldots, x_n$, then there is an orientation of $G$ with no Seymour vertex.
    Indeed, consider the orientation $\vec{G}$ of $G$ where $\vec{G}[\{x_1,\ldots,x_h\}]$ induces a copy of $D$ (which is possible by \ref{goodordering1}) and all other edges are oriented towards the vertex with smaller label in the ordering.
    It is clear that none of $x_1,\ldots,x_h$ can be Seymour vertices in $\vec{G}$: by construction, their outneighbourhoods are defined only by (a subgraph isomorphic to) $D$, so if they were a Seymour vertex for $\vec{G}$, they would also be a Seymour vertex for $D$.
    Moreover, for any $i\in\{h+1,\ldots,n\}$, both $N_1(x_i)$ and $N_2(x_i)$ are contained in $\{x_1,\ldots,x_{i-1}\}$.
    By \ref{goodordering2}, we are guaranteed that $|N_1(x_i)|\geq i/2$, and so $|N_2(x_i)|<|N_1(x_i)|$.

    It remains to show that a.a.s.\ $G(n,p)$ has a good ordering.
    We will make use of the following standard properties.

    \begin{claim}\label{claim:equiv}
        Let\/ $\eps\coloneqq p-1/2$.
        A.a.s.\ $G\sim G(n,p)$ satisfies the following properties.
        \begin{enumerate}[label=$(\mathrm{\roman*})$]
            \item\label{basicproperty0} $G$ contains an induced copy of\/ $H$.
            \item\label{basicproperty1} Every vertex\/ $v\in V(G)$ satisfies\/ $d(v)\geq(1+\eps/2)n/2$.
            \item\label{basicproperty2} For every set\/ $X\subseteq V(G)$ with\/ $|X|\leq(1-\eps/4)n$, there exists a vertex\/ $v\in V(G)\setminus X$ such that\/ $e_G(v,X)\geq|X|/2$.
        \end{enumerate}
    \end{claim}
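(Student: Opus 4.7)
The plan is to establish \ref{basicproperty0}--\ref{basicproperty2} separately; each is a routine probabilistic estimate.

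For \ref{basicproperty0}, I would partition some $\lfloor n/h\rfloor$ disjoint $h$-subsets of $V(G)$ and observe that, with respect to any fixed bijection between such an $h$-set and the vertices of $H$, the probability that $G$ restricts to a copy of $H$ \emph{with exactly the adjacencies of $H$} is $q\coloneqq p^{e(H)}(1-p)^{\binom{h}{2}-e(H)}>0$. The resulting $\lfloor n/h\rfloor$ events are mutually independent, so the probability that none of them holds is at most $(1-q)^{\lfloor n/h\rfloor}=o(1)$. Property \ref{basicproperty1} is even more standard: each degree $d(v)\sim\mathrm{Bin}(n-1,p)$ has mean $(1/2+\eps)(n-1)$, which exceeds the required bound by $\Theta(\eps n)$, so \cref{lem:Chernoff} bounds the failure probability at any fixed vertex by $\nume^{-\Omega(n)}$, and a union bound over the $n$ vertices suffices.

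The bulk of the work is in \ref{basicproperty2}. Fix $X\sub V(G)$ of size $k\leq(1-\eps/4)n$; the random variables $\{e_G(v,X):v\in V(G)\sm X\}$ are mutually independent, each distributed as $\mathrm{Bin}(k,p)$. I would split into two regimes depending on $k$. When $k\geq C\log n$ for a sufficiently large constant $C=C(\eps)$, \cref{lem:Chernoff} gives $\mathbb{P}[e_G(v,X)<k/2]\leq\nume^{-c\eps^2 k}$ for some absolute $c>0$, so by independence the probability that no $v\in V(G)\sm X$ satisfies the required inequality is at most $\nume^{-c\eps^2 k(n-k)}=\nume^{-\Omega(n\log n)}$, easily absorbing the $\binom{n}{k}\leq 2^n$ choices of $X$ in the subsequent union bound. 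When $k<C\log n$, Chernoff is too weak, but for every fixed $k\geq 1$ the quantity $q_k\coloneqq\mathbb{P}[\mathrm{Bin}(k,p)\geq k/2]$ is strictly positive, and since $p>1/2$ forces $q_k\to 1$ as $k\to\infty$, there is a uniform constant $q^{*}=q^{*}(p)>0$ with $q_k\geq q^{*}$ for all $k\geq 1$. Hence the probability that no $v\in V(G)\sm X$ works is at most $(1-q^{*})^{n-k}=\nume^{-\Omega(n)}$, and a union bound over the at most $\sum_{k<C\log n}\binom{n}{k}\leq\nume^{O(\log^2 n)}$ such sets $X$ still gives a $o(1)$ total failure probability. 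Combining the two regimes yields \ref{basicproperty2}.

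The only mildly delicate point is the existence of the uniform lower bound $q^{*}>0$ in the small-$k$ regime, which crucially relies on $p>1/2$; this is precisely the place where the hypothesis $p\in(1/2,1)$ enters the claim (for $p\leq 1/2$ the threshold $|X|/2$ in \ref{basicproperty2} would even fail in expectation for large $X$).
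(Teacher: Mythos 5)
Your proof is correct, and the overall strategy (independence of the variables $e_G(v,X)$ across $v\notin X$, Chernoff for reasonably large $X$, plus a separate argument for small $X$, followed by union bounds) matches the paper's. The two places where you diverge are worth noting. For~\ref{basicproperty0}, you give a self-contained argument via $\lfloor n/h\rfloor$ disjoint $h$-sets; the paper simply cites a standard reference. For the small-$|X|$ regime in~\ref{basicproperty2}, the paper cuts at a \emph{constant} $a=a(p)$ and, for $|X|<a$, shows that a.a.s.\ some vertex is adjacent to \emph{all} of $X$ (using that $p^{|X|}$ is bounded below by a constant), while you cut at $C\log n$ and instead use the uniform lower bound $q^{*}=\inf_{k\geq 1}\mathbb{P}[\mathrm{Bin}(k,p)\geq k/2]>0$, valid because each $q_k\geq p^k>0$ and $q_k\to1$. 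Your version is slightly more streamlined (two regimes rather than three, and it cleanly isolates where $p>1/2$ is used), whereas the paper's common-neighbour argument delivers a stronger conclusion ($e_G(v,X)=|X|$ rather than $\geq|X|/2$) but only up to a constant set size. Both are perfectly sound, and the quantitative union bounds you give ($e^{-\Omega(n\log n)}$ against $2^n$ in the large regime; $e^{-\Omega(n)}$ against $e^{O(\log^2 n)}$ in the small regime) are correct.
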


    \begin{claimproof}
        Property \ref{basicproperty0} is well known (see, e.g., \cite[Proposition~11.3.1]{Diestel}).
        Property \ref{basicproperty1} is standard and follows by an application of \cref{lem:Chernoff} to the degree of a vertex and a union bound over all vertices.\COMMENT{Fix a vertex $v$.
        By \cref{lem:Chernoff}, the probability that $v$ has too small degree is bounded by
        \[\mathbb{P}[d(v)<(1+\eps/2)n/2]=\mathbb{P}[d(v)<(1-3\eps/4p)np]\leq\mathbb{P}[d(v)<(1-4\eps/5p)(n-1)p]\leq2\nume^{-\frac{16\eps^2}{75p}(n-1)}=\nume^{-\Theta(n)}.\]
        A union bound over all vertices completes the proof.}
        Here we only show the details for the (also simple) proof of property~\ref{basicproperty2}.
        Let 
        \[a\coloneqq \frac{3\log16}{p\left(1-\frac{1}{2p}\right)^2}\]
        (note that this is independent of $n$).
        
        Consider first all sets $X\subseteq V(G)$ of size $|X|<a$.
        We first show that a.a.s.\ all vertices in each such set have a common neighbour in $G$.
        Indeed, fix a set $X$ with $|X|=k<a$.
        The probability that any given vertex $v\in V(G)\setminus X$ is a neighbour of all vertices in $X$ is $p^k$, which is a constant.
        Thus, the probability that no vertex is a common neighbour of $X$ is $(1-p^k)^{n-|X|}\leq\nume^{-p^kn/2}=o(n^{-2k})$.
        By a union bound over all sets $X\subseteq V(G)$ with $|X|=k$ and over the possible values of $k$, we reach the desired conclusion.

        Consider now any set $X\subseteq V(G)$ with $a\leq|X|\leq n/2$.
        For any vertex $v\in V(G)\setminus X$, we have by \cref{lem:Chernoff} that $\mathbb{P}[e(v,X)<|X|/2]\leq1/8$.\COMMENT{The variable $e(v,X)$ is binomial, so Chernoff bounds apply. 
        We have 
        \[\mathbb{P}[e(v,X)<|X|/2]=\mathbb{P}\left[e(v,X)<\left(1-\left(1-\frac{1}{2p}\right)\right)|X|p\right]\leq2\exp\left(-\frac13\left(1-\frac{1}{2p}\right)^2|X|p\right)\leq1/8.\]
        Here, the first inequality comes from \cref{lem:Chernoff}, and the second is a consequence of the lower bound on $|X|$.}
        Thus, the probability that all vertices $v\in V(G)\setminus X$ have fewer than $|X|/2$ neighbours in $X$ is at most $8^{-(n-|X|)}\leq2^{-3n/2}$.\COMMENT{We are using the upper bound on $|X|$.}
        The conclusion follows by a union bound over the at most $2^n$ choices for $X$.

        Lastly, fix any set $X\subseteq V(G)$ with $n/2\leq|X|\leq(1-\eps/4)n$.
        Similarly as above, for any vertex $v\in V(G)\setminus X$ we have by \cref{lem:Chernoff} that $\mathbb{P}[e(v,X)<|X|/2]\leq\nume^{-\Theta(n)}$,\COMMENT{Proceeding as above, we have
        \[\mathbb{P}[e(v,X)<|X|/2]\leq2\exp\left(-\frac13\left(1-\frac{1}{2p}\right)^2|X|p\right),\]
        and now this is exponential in $n$ by the bounds on $|X|$.} so the probability that all vertices $v\in V(G)\setminus X$ have fewer than $|X|/2$ neighbours in $X$ is at most $\nume^{-\Theta(n^2)}$.\COMMENT{Here we use the upper bound on $|X|$.}
        One last union bound completes the proof of the claim.
    \end{claimproof}

    Condition now on the event that $G\sim G(n,p)$ satisfies the properties of \cref{claim:equiv}, which holds a.a.s.
    We construct a good ordering for $G$ as follows.
    First, choose an arbitrary set of $h$ vertices which induces a copy of $H$ (which exists by \ref{basicproperty0}) and label the vertices $x_1,\ldots,x_h$ arbitrarily.
    Thus, \ref{goodordering1} is satisfied.
    Next, for each $i\in\{h+1,\ldots,(1-\eps/4)n\}$ in turn, by \ref{basicproperty2}, there exists some vertex $v\in V(G)\setminus\{x_1,\ldots,x_{i-1}\}$ satisfying that $|N(v)\cap\{x_1,\ldots,x_{i-1}\}|\geq i/2$; let $x_i$ be an arbitrary vertex satisfying this property.
    Finally, by \ref{basicproperty1}, all vertices $v\in V(G)\setminus\{x_1,\ldots,x_{(1-\eps/4)n}\}$ satisfy that $d(v)\geq(1+\eps/2)n/2$, and thus, $|N(v)\cap\{x_1,\ldots,x_{(1-\eps/4)n}\}|\geq n/2$.
    Thus, the remaining vertices can be labelled arbitrarily while guaranteeing that \ref{goodordering2} holds.
\end{proof}

\subsection{Counterexamples and minimum degrees}

We begin with the simple proof of \cref{prop:degree-reduction}.

\begin{proof}[Proof of \cref{prop:degree-reduction}]
    Let $D_0$ be a counterexample to \cref{conj:Seymour}.
    We define $n_0\coloneqq|V(D_0)|$ and $d_0\coloneqq\delta^+(D_0)$.
    Let $N_0\in\mathbb{N}$ be such that $d(n)>d_0+n_0$ for all $n\geq N_0n_0$.
    For any $N\geq N_0$, we construct a strongly-connected counterexample $D$ to \cref{conj:Seymour} with $Nn_0$ vertices and $\delta^+(D)<d(Nn_0)$.

    The construction is a ``blow-up'' of a consistently-oriented cycle where each vertex is replaced by a copy of $D_0$.
    Formally, let $D_0,D_1,\ldots,D_{N-1}$ be vertex-disjoint copies of $D_0$ and, for each $i\in[N]$, add a complete bipartite graph between the vertices of $D_i$ and those of $D_{i+1}$, with all edges oriented towards $D_{i+1}$ (where we take indices modulo $N$).
    Note that the resulting oriented graph $D$ is strongly connected (simply by considering the complete bipartite oriented graphs between the different copies of $D_0$) and that $\delta^+(D)=d_0+n_0<d(Nn_0)$, as desired.
    By the symmetry of the construction, we now only need to verify that no vertex in $D_1$ is a Seymour vertex for $D$.
    Let us write $N'(\cdot)$ to denote neighbourhoods in $D_1$, while we use $N(\cdot)$ for neighbourhoods in $D$.
    Note that, for every $v\in V(D_1)$, we have that $N_1(v)=N_1'(v)\cup V(D_2)$ and $N_2(v)=N_2'(v)\cup V(D_3)$, and so $|N_2(v)|\geq|N_1(v)|$ if and only if $|N_2'(v)|\geq|N_1'(v)|$.
    But $D_1$ contains no Seymour vertices, so neither does $D$.
\end{proof}

Now note that any vertex-minimal counterexample $D$ to \cref{conj:Seymour} must be strongly connected.
Indeed, otherwise one may consider the auxiliary directed acyclic graph obtained from the strongly-connected components of $D$ and note that any strong component which has no outneighbours in this auxiliary graph would itself be a smaller counterexample to the conjecture.

\begin{proof}[Proof of \cref{prop:degree-minimal}]
Suppose for a contradiction that the statement is false, that is, that there is a vertex-minimal counterexample $D$ with $n$ vertices and $d\coloneqq\delta^+(D)\leq\sqrt{n}$.
Pick a vertex $x\in V(D)$ of smallest outdegree, i.e., $d^+(x)=d$, and let $A_1\coloneqq\{x\}\cup N_1(x)$, $X_1\coloneqq N_2(x)$, and $B_1\coloneqq V(D)\setminus(A_1\cup X_1)$.
Note that, by assumption, $x$ is not a Seymour vertex, so $|X_1|<d$.
Moreover, $E(A_1,B_1)=\varnothing$.

Now we go through the following iterative process:
while $X_i$ contains some non-empty subset $X_i'$ with $|N_1(X_i')\cap B_i|<|X_i'|$, define $A_{i+1}\coloneqq A_i\cup X_i'$, $X_{i+1}\coloneqq (X_i\setminus X_i')\cup(N_1(X_i')\cap B_i)$ and $B_{i+1}\coloneqq B_i\setminus N_1(X_i')$, and proceed to the next iteration.
Observe that, throughout the process, we maintain the property that $E(A_i,B_i)=\varnothing$.
In particular, since $D$ is strongly connected, this guarantees that, if $B_i\neq\varnothing$, then $E(X_i,B_i)\neq\varnothing$.
In the $i$-th iteration of the above procedure we decrease the size of $X_i$ by at least $1$, and decrease the size of $B_i$ by at most $|X_i|$. 
Hence, after at most $d-1$ steps, the procedure must stop, say, at step $t<d$ with $X_t\neq\varnothing$ and $|B_t|\geq n-\inbinom{d}{2}\geq1$ by the choice of $d$.\COMMENT{So this comes out to $2n\geq d^2-d+2$, which is equivalent to $d\leq(1+(8n-7)^{1/2})/2$, and one can verify that $\sqrt{n}\leq(1+(8n-7)^{1/2})/2$ for all $n\geq1$.}

By the minimality of $D$, $D[A_t]$ must contain a Seymour vertex, say $z$.
We claim that $z$ is also a Seymour vertex for $D$.
Indeed, let us write $N'(\cdot)$ to denote neighbourhoods in $D[A_t]$.
Then, using that $E(A_t,B_t)=\varnothing$, we have that 
\[|N_2(z)|-|N_1(N_1(z)\cap X_t)\cap B_t|\geq|N_2'(z)|\geq|N_1'(z)|=|N_1(z)|-|N_1(z)\cap X_t|.\]
Since the procedure above has ended, we have that $|N_1(N_1(z)\cap X_t)\cap B_t|\geq|N_1(z)\cap X_t|$, and so it follows that $|N_2(z)|\geq|N_1(z)|$.
This contradicts the fact that $D$ has no Seymour vertex.
\end{proof}

\section{Concluding remarks}

In \cref{thm:below} we showed that, for fixed $p<1/2$, a.a.s.\ $G(n,p)$ satisfies Seymour's conjecture, and in \cref{thm:equiv} we showed that, for fixed $p\in(1/2,1)$, \cref{conj:Seymour} is equivalent to knowing that, with probability bounded away from $0$, all orientations of the random graph $G(n,p)$ contain a Seymour vertex. 
It would be interesting to see whether the latter provides a useful avenue for tackling \cref{conj:Seymour}.
We note that \citet{BMN22} showed that, for fixed $p\in[1/2,2/3)$, a.a.s.\ every orientation of $G(n,p)$ with minimum outdegree $\omega(\sqrt{n})$ has a Seymour vertex, so for $p$ in this range it now suffices to study orientations with small minimum outdegree.

Neither of our results applies when $p=(1\pm o(1))/2$, which is arguably one of the more interesting values for $p$.
In particular, it would be interesting to understand whether every orientation of almost all graphs (which corresponds precisely to the case $p=1/2$) contains a Seymour vertex.
In the spirit of applying a similar method as in our proof of \cref{thm:equiv}, we ask the following question.

\begin{question}
Does\/ $G(n,1/2)$ a.a.s.\ have an ordering\/ $x_1,\ldots, x_n$ of its vertices such that\/ $|N(x_i)\cap$\linebreak{} $\{x_1,\ldots x_{i-1}\}|\geq i/2$ for all\/ $i\in[n]$? 
\end{question}

\Cref{thm:equiv} also leaves a gap for the cases when $p=1-o(1)$.
It would thus be interesting to consider orientations of $G(n,p)$ when $p$ tends to $1$ (with the case $p=1$, corresponding to tournaments, being solved already~\cite{F96,HT00}).

Lastly, \cref{prop:degree-reduction,prop:degree-minimal} provide information about the minimum degree of (assumed) counterexamples to \cref{conj:Seymour}.
\Cref{prop:degree-reduction} shows that, if \cref{conj:Seymour} is false, then we must be able to construct arbitrarily large strongly-connected counterexamples with bounded minimum outdegree.
\Cref{prop:degree-minimal}, on the other hand, shows that the search for (vertex-minimal) counterexamples may be limited to oriented graphs with large minimum outdegree.

\section*{Acknowledgements}

We are grateful to the anonymous referees for helpful comments on an earlier version of this paper.

\bibliographystyle{mystyle}
\bibliography{references}

\begin{thebibliography}{18}
\newcommand{\enquotenew}[1]{`#1'}
\providecommand{\natexlab}[1]{#1}
\providecommand{\url}[1]{\texttt{#1}}
\providecommand{\urlprefix}{URL }
\providecommand{\doi}[1]{\textsc{doi}:
  \href{https://doi.org/#1}{\nolinkurl{#1}}}
\providecommand*{\eprint}[2][]{arXiv:
  \href{https://arxiv.org/abs/#2}{\nolinkurl{#2}}}

\bibitem[{{Ai}, {Gerke}, {Gutin}, {Wang}, {Yeo} and {Zhou}(2023)}]{AGGWYZ23}
J.~{Ai}, S.~{Gerke}, G.~{Gutin}, S.~{Wang}, A.~{Yeo} and Y.~{Zhou}, {{On
  Seymour's and Sullivan's second neighbourhood conjectures}}. \emph{arXiv
  e-prints}  (2023). \eprint{2306.03493}.

\bibitem[{Alon and Spencer(2016)}]{AS16}
N.~Alon and J.~H. Spencer, \emph{The probabilistic method}. Wiley Series in
  Discrete Mathematics and Optimization, John Wiley \& Sons, Inc., Hoboken, NJ,
  4th ed. (2016), ISBN 978-1-119-06195-3.

\bibitem[{{Botler}, {Moura} and {Naia}(2023)}]{BMN22}
F.~{Botler}, P.~F.~S. {Moura} and T.~{Naia}, {{Seymour's second neighborhood
  conjecture for orientations of (pseudo)random graphs}}. \emph{Discrete
  Mathematics} 346.12 (2023),  113\,583, \doi{10.1016/j.disc.2023.113583}.

\bibitem[{Caccetta and H\"{a}ggkvist(1978)}]{CH78}
L.~Caccetta and R.~H\"{a}ggkvist, {On minimal digraphs with given girth}.
  \emph{Proceedings of the {N}inth {S}outheastern {C}onference on
  {C}ombinatorics, {G}raph {T}heory, and {C}omputing ({F}lorida {A}tlantic
  {U}niv., {B}oca {R}aton, {F}la., 1978)}, \emph{Congress. Numer.}, vol. XXI,
  Utilitas Math., Winnipeg, MB (1978)  181--187.

\bibitem[{Chen and Chang(2023)}]{CC23}
B.~Chen and A.~Chang, {A note on {Seymour}'s second neighborhood conjecture}.
  \emph{Discrete Appl. Math.} 337 (2023),  272--277,
  \doi{10.1016/j.dam.2023.05.012}.

\bibitem[{Chen, Shen and Yuster(2003)}]{CSY03}
G.~Chen, J.~Shen and R.~Yuster, {Second neighborhood via first neighborhood in
  digraphs}. \emph{Ann. Comb.} 7.1 (2003),  15--20,
  \doi{10.1007/s000260300001}.

\bibitem[{Cohn, Godbole, {Wright Harkness} and Zhang(2016)}]{CGHZ16}
Z.~Cohn, A.~Godbole, E.~{Wright Harkness} and Y.~Zhang, {The number of
  {Seymour} vertices in random tournaments and digraphs}. \emph{Graphs Comb.}
  32.5 (2016),  1805--1816, \doi{10.1007/s00373-015-1672-9}.

\bibitem[{Dean and Latka(1995)}]{DL95}
N.~Dean and B.~J. Latka, {Squaring the tournament---an open problem}.
  \emph{Congr. Numerantium} 109 (1995),  73--80.

\bibitem[{{Diestel}(2017)}]{Diestel}
R.~{Diestel}, \emph{{Graph theory}}, \emph{{Graduate Texts in Mathematics}},
  vol. 173. Springer, Berlin (2017), ISBN 978-3-662-53621-6; 978-3-662-53622-3.

\bibitem[{Erd\H{o}s and R\'{e}nyi(1959)}]{ER59}
P.~Erd\H{o}s and A.~R\'{e}nyi, {On random graphs. {I}}. \emph{Publ. Math.
  Debrecen} 6 (1959),  290--297.

\bibitem[{Fidler and Yuster(2007)}]{FY07}
D.~Fidler and R.~Yuster, {Remarks on the second neighborhood problem}. \emph{J.
  Graph Theory} 55.3 (2007),  208--220, \doi{10.1002/jgt.20229}.

\bibitem[{Fisher(1996)}]{F96}
D.~C. Fisher, {Squaring a tournament: {A} proof of {Dean}'s conjecture}.
  \emph{J. Graph Theory} 23.1 (1996),  43--48,
  \doi{10.1002/(SICI)1097-0118(199609)23:1<43::AID-JGT4>3.0.CO;2-K}.

\bibitem[{Ghazal(2012)}]{Gha12}
S.~Ghazal, {Seymour's second neighborhood conjecture for tournaments missing a
  generalized star}. \emph{J. Graph Theory} 71.1 (2012),  89--94,
  \doi{10.1002/jgt.20634}.

\bibitem[{Havet and Thomass{\'e}(2000)}]{HT00}
F.~Havet and S.~Thomass{\'e}, {Median orders of tournaments: {A} tool for the
  second neighborhood problem and {Sumner}'s conjecture}. \emph{J. Graph
  Theory} 35.4 (2000),  244--256,
  \doi{10.1002/1097-0118(200012)35:4<244::AID-JGT2>3.0.CO;2-H}.

\bibitem[{Janson, \L{}uczak and Ruci\'{n}ski(2000)}]{JLR}
S.~Janson, T.~\L{}uczak and A.~Ruci\'{n}ski, \emph{Random graphs}.
  Wiley-Interscience Series in Discrete Mathematics and Optimization,
  Wiley-Interscience, New York (2000), \doi{10.1002/9781118032718}.

\bibitem[{Llad{\'o}(2013)}]{Lla13}
A.~Llad{\'o}, {On the second neighborhood conjecture of {Seymour} for regular
  digraphs with almost optimal connectivity}. \emph{Eur. J. Comb.} 34.8 (2013),
   1406--1410, \doi{10.1016/j.ejc.2013.05.023}.

\bibitem[{Seacrest(2015)}]{Sea15}
T.~Seacrest, {The arc-weighted version of the second neighborhood conjecture}.
  \emph{J. Graph Theory} 78.3 (2015),  219--228, \doi{10.1002/jgt.21800}.

\bibitem[{{Sullivan}(2006)}]{Sullivan}
B.~D. {Sullivan}, {{A summary of problems and results related to the
  Caccetta-H\"{a}ggkvist conjecture}}. \emph{arXiv Mathematics e-prints}
  (2006). \eprint{math/0605646}.

\end{thebibliography}


\appendix

\section{On Sullivan's second neighbourhood conjecture for random graphs}\label{app:Sullivan}

\citet{Sullivan} proposed multiple variants and strengthenings of \cref{conj:Seymour}.
Here we consider one of them.
Given any digraph $D$ and a vertex $v\in V(D)$, let us now write $N^+(v)\coloneqq N_1(v)$ for the outneighbourhood of $v$, $N^-(v)\coloneqq\{w\in V\setminus \{v\}:wv\in E(D)\}$ for the inneighbourhood of~$v$, and $N^+_2(v)\coloneqq N_2(v)$ for the second outneighbourhood of $v$.
Given a set $A\subseteq V(D)$, we define $N^+(A)\coloneqq(\bigcup_{v\in A}N^+(v))\setminus A$.
We say that $v$ is a \emph{Sullivan vertex} if $|N^+_2(v)|\geq|N^-(v)|$.
(Notice that, contrary to the definition of a Seymour vertex, $N^+_2(v)$ and $N^-(v)$ may intersect.)

\begin{conjecture}[{\cite[Conjecture~6.6]{Sullivan}}]\label{conj:Sullivan}
    Every oriented graph has a Sullivan vertex.
\end{conjecture}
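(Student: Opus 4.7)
Sullivan's conjecture, like \cref{conj:Seymour}, is a longstanding open problem, so any proof plan is necessarily speculative; no general method is currently known even for tournaments. My strategy would be to mirror the two-step attack on Seymour's conjecture (tournaments first, then lift to arbitrary oriented graphs), adapting both steps to an inneighbourhood-based inequality.

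For tournaments $T$, the identity $|N^+(v)|+|N^-(v)|=n-1$ rewrites Sullivan's bound as $|N^+(v)|+|N^+_2(v)|\geq n-1$, which, since $N^+(v)$ and $N^+_2(v)$ are disjoint, is equivalent to saying that every vertex of $V(T)\setminus\{v\}$ lies in $N^+(v)\cup N^+_2(v)$, i.e., is reachable from $v$ in at most two steps. I would try to exhibit such a $v$ via a median-order / local-swap argument in the spirit of Havet--Thomassé: order $V(T)$ to minimise a carefully chosen potential, then show the last vertex must satisfy this reachability property, since otherwise swapping it with a non-reachable predecessor would strictly decrease the potential.

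To lift from tournaments to general oriented graphs $D$, I would attempt a tournament-completion reduction: orient each non-edge of $D$ arbitrarily to obtain a tournament $T$, locate a Sullivan vertex $v\in V(T)$, and argue that for some completion the $T$-reachability forces $|N^+_{D,2}(v)|\geq|N^-_D(v)|$. Since the inequality in $T$ does not directly imply the one in $D$ (a completion can create both new inneighbours and new second outneighbours of $v$), this reduction would need either a weighted averaging over all completions, or an extremal minimal-counterexample argument modelled on \cref{prop:degree-minimal} but using $\delta^-$ in place of $\delta^+$, together with careful bookkeeping for the fact that $N^+_2$ and $N^-$ may intersect.

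The main obstacle is almost certainly the tournament case. Seymour's inequality relates $|N^+|$ with $|N^+_2|$, an asymmetry exploited by both Fisher's fractional proof and Havet--Thomassé's median-order proof. Sullivan's inequality compares $|N^+_2|$ with $|N^-|$, and, crucially, $N^+_2(v)$ and $N^-(v)$ may overlap arbitrarily, so simple vertex-counting breaks down and no known potential function makes the local-swap argument close. Overcoming this would require a genuinely new combinatorial identity or weighting scheme; this is why the remainder of the appendix presumably focuses on analogues of \cref{thm:below,thm:equiv,prop:degree-reduction,prop:degree-minimal} for the Sullivan setting, where the random structure of $G(n,p)$ tames the interaction between inneighbours and second outneighbours.
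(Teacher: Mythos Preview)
The paper does not prove \cref{conj:Sullivan}; it is stated as an open conjecture, and the appendix only establishes the random-graph analogue of \cref{thm:below} (namely \cref{thm:Sullivan_below}). So there is no proof in the paper to compare your proposal against, and you are right to treat any plan as speculative.

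That said, your proposal contains a substantive error about the state of the problem. You write that ``no general method is currently known even for tournaments'' and that ``the main obstacle is almost certainly the tournament case''. In fact your own reformulation already solves the tournament case: you correctly observe that in a tournament the Sullivan inequality is equivalent to the existence of a vertex $v$ from which every other vertex is reachable in at most two steps. This is precisely the classical statement that every tournament has a \emph{king}, and the one-line proof is that any vertex of maximum outdegree works (if some $w$ were not reachable in two steps from such a $v$, then $w$ would dominate $v$ and all of $N^+(v)$, giving $d^+(w)>d^+(v)$). The paper itself notes, citing \cite{AGGWYZ23}, that the tournament case of Sullivan's conjecture is known. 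So the difficulty lies entirely in the lifting step, not in tournaments, and your proposed median-order attack on tournaments is unnecessary.

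For the lifting step, your tournament-completion idea faces the obstacle you identify, and no averaging or minimal-counterexample argument of the type you sketch is known to close the gap; this is why the conjecture remains open. Finally, the appendix does not pursue analogues of \cref{thm:equiv}, \cref{prop:degree-reduction}, or \cref{prop:degree-minimal} for Sullivan's conjecture; it only proves the analogue of \cref{thm:below}.
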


\Cref{conj:Sullivan} has received far less attention than \cref{conj:Seymour}.
Only recently, \citet{AGGWYZ23} showed that it holds for certain classes of graphs, including tournaments, planar oriented graphs and some families of oriented split graphs.
Moreover, they also showed that almost all oriented graphs satisfy \cref{conj:Sullivan}, as a.a.s.\ a random orientation of $G(n,p)$ (for $p\in(0,1)$ independent of $n$) satisfies the conjecture.
Here we use similar ideas to those presented in the paper to prove a result analogous to \cref{thm:below} for Sullivan's conjecture.

\begin{theorem}\label{thm:Sullivan_below}
    Let\/ $p=p(n)\in[0,1]$ be such that\/ $\limsup_{n\to\infty}p<1/2$.
    Then, a.a.s.\ every orientation of\/ $G(n,p)$ contains a Sullivan vertex.
\end{theorem}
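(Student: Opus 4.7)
The plan is to adapt the proof of \cref{thm:below} essentially unchanged, exploiting that the Sullivan condition is in a precise sense \emph{weaker} than Seymour's. Fix $\eps>0$ such that $p(n) \leq 1/2 - \eps$ for all sufficiently large $n$. The key observation is that if $v$ is a vertex of an orientation $\vec{G}$ of an $n$-vertex graph with $d(v) \leq (1-\eps)n/2$ and whose two-step out-ball $\{v\} \cup N^+(v) \cup N^+_2(v)$ has size at least $(1-\eps)n/2 + 1$, then
\[
|N^+_2(v)| \geq (1-\eps)n/2 - d^+(v) \geq d(v) - d^+(v) = d^-(v),
\]
so $v$ is a Sullivan vertex. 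Hence, to prove the theorem it suffices to produce a vertex with a sufficiently large two-step out-ball.

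First, I would verify an analogue of \cref{claim1}: a.a.s.\ $G \sim G(n,p)$ satisfies $d(v) \leq (1-\eps)n/2$ for every vertex $v$, and $(p-\delta)|A||B| \leq e(A,B) \leq (1-\eps)|A||B|/2$ for all disjoint $A,B \sub V(G)$ with $|A|,|B| \geq C\log n$. When $p$ is bounded away from $0$, this follows exactly as in \cref{claim1} via \cref{lem:Chernoff}, and for sparser $p=p(n) \to 0$ only minor adjustments are needed. The extreme sparse regime where $G(n,p)$ a.a.s.\ contains an isolated vertex is trivial, since isolated vertices are vacuously Sullivan.

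Conditioning on these properties, I would take an arbitrary orientation $\vec{G}$ of $G$ and order the vertices $x_1,\dots,x_n$ by decreasing outdegree, as in the proof of \cref{thm:below}. If the analogue of~\eqref{eq:keyprop} holds for every $i$, the ``otherwise'' step of the proof of \cref{thm:below} produces a vertex $y$ with $|N^+(y) \cup N^+_2(y)| \geq (1-\eps)n$, and $y$ is Sullivan by the observation above. Otherwise \cref{claim2} applies to some $x_i$, yielding $|N^+_2(x_i)| \geq d^+(x_i) =: m$ and therefore $|N^+(x_i) \cup N^+_2(x_i)| \geq 2m$; this suffices for Sullivan whenever $x_i$ is out-heavy (i.e., $d^+(x_i) \geq d^-(x_i)$).

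The main obstacle I foresee is the residual case where~\eqref{eq:keyprop} fails at some in-heavy $x_i$, for which the \cref{claim2} bound $|N^+_2(x_i)| \geq m$ falls short of $d^-(x_i)$. To close this gap I would try to sharpen \cref{claim2} by exploiting the in-neighbourhood of $x_i$: the large set $N^-(x_i)$ lies in $V \setminus (\{x_i\} \cup N^+(x_i))$, and the density estimates force many edges between $N^-(x_i)$ and $N^+(x_i)$, some of which must contribute additional elements to $N^+_2(x_i)$. Alternatively, one could iterate the procedure to locate an out-heavy Sullivan vertex within the high-outdegree prefix $X_{i-1}$. I expect this to be the most delicate step of the proof.
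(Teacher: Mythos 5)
The core observation you isolate at the outset --- that $d(v)\le(1-\eps)n/2$ plus a large two-step out-ball $\{v\}\cup N^+(v)\cup N^+_2(v)$ forces $v$ to be a Sullivan vertex --- is correct and is essentially the mechanism the paper uses in the dense regime (see \cref{claim2bis}, which bounds $|N^-(x_i)|\le(1+\delta)np-|N^+(x_i)|$ and derives $|B|\ge(1+\delta)np$). However, there are two genuine gaps in your proposal.

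First, the sparse regime is not handled. You claim that for $p=p(n)\to 0$ ``only minor adjustments are needed'' to \cref{claim1}, and that the only delicate sparse case is the isolated-vertex regime. This is not the case. For $p=o(1)$ the two-sided density estimate of \cref{claim1}~\ref{claim1item3} fails outright for sets of size $C\log n$: when $|A|=|B|=C\log n$ and $p\ll 1/\log^2 n$, one has $\Exp[e(A,B)]=o(1)$ and $e(A,B)$ does not concentrate. Moreover, your key observation itself becomes vacuous: when $np^2=o(1)$ the two-step out-ball has size $O(np+(np)^2)=o(n)$, so it never reaches $(1-\eps)n/2$. The paper does not treat this as a ``minor adjustment'': it proves a separate result (\cref{prop:Sullivan1}, covering $3\log n/(4n)\le p=o(1)$) by a completely different double-counting argument on consistently oriented paths of length $2$, and even in the regime $p=\omega(\log^{-1/3}n)$ it replaces the $C\log n$ thresholds of \cref{claim1}~\ref{claim1item3} with asymmetric thresholds (\cref{claim1bis}~\ref{claim1bisitem3}--\ref{claim1bisitem4}) that are actually attainable.

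Second, you explicitly flag the in-heavy residual case as ``the most delicate step'' and leave it open, offering only speculative directions (sharpening \cref{claim2} by using $N^-(x_i)$, or iterating into the prefix). Neither is carried out, and neither is what the paper does. \Cref{claim2} as stated only yields $|N_2(x_i)|\ge d^+(x_i)$, which is the Seymour bound and is insufficient when $d^-(x_i)>d^+(x_i)$. The paper replaces \cref{claim2} with \cref{claim2bis}, which uses the outdegree ordering together with the density property to lower-bound $|N^+(A)|$ for a small subset $A\subseteq N^+(x_i)\cap X_{i-1}$, obtaining the stronger conclusion $|N^+_2(x_i)|\ge(1+\delta)np-|N^+(x_i)|\ge|N^-(x_i)|$ directly. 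That step requires new estimates (\cref{claim3bis}, showing that most vertices have outdegree at least $(1+\eps)np^2$) that do not appear in your outline. As written, your proposal has a hole precisely at the point you identify, and the hole is not closed.
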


As a difference with respect to \cref{thm:below}, we note that here we must consider the cases when $p$ tends to $0$; for Seymour's conjecture, these cases were covered by the earlier results of \citet{BMN22}.

We note first that, if a graph $G$ has an isolated vertex, then so do all orientations of $G$, and isolated vertices are Sullivan vertices.
Thus, it follows from the well-known threshold for connectivity~\cite{ER59} that, if $p\leq(1-\eps)\log n/n$,
then a.a.s.\ every orientation of $G(n,p)$ has a Sullivan vertex.
Thus, in order to prove \cref{thm:Sullivan_below} we only need to consider larger values of $p$.
In particular, \cref{thm:Sullivan_below} follows directly from the following two results.

\begin{proposition}\label{prop:Sullivan1}
    Let\/ $3\log n/4n\leq p=p(n)= o(1)$.
    Then, a.a.s.\ every orientation of\/ $G(n,p)$ contains a Sullivan vertex.
\end{proposition}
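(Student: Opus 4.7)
Proof plan: I would split the argument into two cases based on the value of $p$ relative to the connectivity threshold $\log n/n$.

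In the first case, suppose $3\log n/(4n)\le p\le(1-\eta)\log n/n$ for some fixed $\eta>0$ (taken small enough that the range is non-empty). The expected number of isolated vertices in $G(n,p)$ is
$n(1-p)^{n-1}=(1+o(1))n\nume^{-np}\ge n^{\eta+o(1)}$,
which tends to infinity. A standard second moment computation (as in \cite{JLR}) then shows that $G(n,p)$ a.a.s.\ contains at least one isolated vertex. In any orientation, an isolated vertex $v$ trivially satisfies $|N^-(v)|=0=|N^+_2(v)|$, so it is a Sullivan vertex and we are done.

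In the second case, suppose $(1-\eta)\log n/n\le p=o(1)$, so isolated vertices need not exist. Here I would try to adapt the strategy used in the proof of \cref{thm:below} to the sparse regime. First, I would establish via \cref{lem:Chernoff} a pseudo-random property of $G(n,p)$: for any pair of disjoint sets $A,B\subseteq V(G)$ of suitably large size, the edge count $e(A,B)$ concentrates around its expectation $p|A||B|$. Given any orientation $\vec{G}$, I would then choose a vertex $v$ extremally (for instance, maximizing $d^+(v)$) and examine the edges in $G$ between $A\coloneqq N^+(v)$ and $V\setminus(\{v\}\cup N^+(v))$. By the pseudo-random property, these edges are plentiful; if too many of them were oriented from the complement into $A$, then the vertices of $A$ would have very large in-degree, which would conflict with the extremality of $v$ via a degree-counting argument. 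Hence most of these edges must be oriented outward from $A$, yielding $|N^+_2(v)|\ge n-|N^+(v)|-o(n)\ge|N^-(v)|$ and making $v$ a Sullivan vertex.

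The main obstacle is this second case. Close to the connectivity threshold, concentration is weak for small sets, and the extremal vertex $v$ may fail to have the properties needed for the pseudo-random argument. A possible remedy is to tune $\eta$ carefully and combine the global pseudo-random bounds with local structural features of $G(n,p)$ (such as the scarcity of short cycles or of dense substructures), or to iterate the choice of $v$ to amplify the resulting contradiction. The case split itself also needs to be checked to ensure that both ranges together cover the full interval $3\log n/(4n)\le p=o(1)$.
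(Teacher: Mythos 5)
Your Case 1 (isolated vertices for $3\log n/(4n)\le p\le(1-\eta)\log n/n$) is correct, but note that the paper already dispatches this regime separately, before stating the proposition, so it is not the real content here.

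Case 2 is where the proof is actually needed, and the gap is not merely "weak concentration near the threshold" as you suggest: the target inequality $|N^+_2(v)|\ge n-|N^+(v)|-o(n)$ is simply \emph{false} when $p=o(1)$. A.a.s.\ every vertex of $G(n,p)$ has degree $O(np)$, so for any vertex $v$ in any orientation one has $|N^+(v)|=O(np)=o(n)$ and $|N^+_2(v)|=O((np)^2)=o(n)$. The second outneighbourhood is a vanishing fraction of $V$, not nearly all of it, so the "capture almost everything in $N^+(v)\cup N^+_2(v)$" strategy (which does work for constant $p$ in \cref{thm:below}) cannot be transported to the sparse regime; no amount of tuning $\eta$ or iterating the extremal choice of $v$ will fix this. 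The paper's proof of \cref{prop:Sullivan1} is qualitatively different: it shows a.a.s.\ $d(v)\le 4np$ for all $v$, codegrees are at most $np/2400$, and every set of $n/3$ vertices spans at least $n^2p/25$ edges; then, assuming for contradiction that $|N^+_2(v)|\le 4np$ for every $v$ (otherwise $v$ is Sullivan since $|N^-(v)|\le 4np$), it double-counts consistently oriented paths of length $2$. Counting by starting vertex gives $P< (8np)(np/2400)n=n^3p^2/300$, while counting by middle vertex gives $P=\sum_v d^+(v)d^-(v)\ge (n/3)(np/10)^2= n^3p^2/300$, since a.a.s.\ at least $n/3$ vertices have both in- and outdegree at least $np/10$ — a contradiction. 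You would need an argument of this flavour; the route you sketch does not close.
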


\begin{proposition}\label{prop:Sullivan2}
    Let\/ $p=p(n)\in[0,1]$ with\/ $p=\omega(\log^{-1/3}n)$ and\/ $\limsup_{n\to\infty}p<1/2$.
    Then, a.a.s.\ every orientation of\/ $G(n,p)$ contains a Sullivan vertex.
\end{proposition}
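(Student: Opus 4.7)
The plan is to follow closely the structure of the proof of \cref{thm:below}, with modifications to handle Sullivan's condition and the non-constant range of $p$. Let $\eta > 0$ be such that $p(n) \leq 1/2 - \eta$ for large $n$, and set $\eps := \eta$ and $\delta := \eps/3$. The first step is to establish, via Chernoff bounds and union bounds, that a.a.s.\ $G \sim G(n, p)$ satisfies analogues of the properties of Claim 1 in the proof of \cref{thm:below}: namely, every vertex has degree at most $(1 - \eps') n/2$ for some $\eps' > 0$, and every pair of disjoint sets $A, B \subseteq V(G)$ with $|A|, |B| \geq L_1 := C \log n / p$ satisfies $e(A, B) = (p \pm \delta)|A||B|$. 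The hypothesis $p = \omega(\log^{-1/3} n)$ ensures that the Chernoff bounds yield the required concentration at this scale.

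Next, fix an arbitrary orientation $\vec{G}$ of $G$, order the vertices $x_1, \ldots, x_n$ by decreasing out-degree, and set $L' := 2C \log n / (p \eps)$ and $X_i := \{x_1, \ldots, x_i\}$. The analogue of Claim 2 from the proof of \cref{thm:below}, with thresholds rescaled by $p$, gives: if $|N_1(x_i) \cap X_{i-1}| \geq L'$, then $|N_2(x_i)| \geq d^+(x_i)$. Hence, whenever additionally $d^+(x_i) \geq d^-(x_i)$, we may conclude that $x_i$ is a Sullivan vertex. The next step is to reduce to the case where the analogue of \eqref{eq:keyprop} holds, namely $|N_1(x_i) \cap X_{i-1}| \leq L'$ for all $i$, which enables the final-step argument.

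Under this key property, I would mimic the final step of the proof of \cref{thm:below}: partition $X_{2\delta n} = B_1 \cupdot B_2$ with $B_1 := X_{\delta n}$, find $y \in B_1$ with $\vec{e}(y, B_2) \geq (p - 2\delta)\delta n$ by averaging over $\vec{e}(B_1, B_2) \geq (p - 2\delta) \delta^2 n^2$, and show that the set $Z$ of vertices outside $X_{2\delta n}$ with no in-edge from $Y := N_1(y) \cap B_2$ has size less than $L_1$ via density and the key property. This yields $|N_1(y) \cup N_2(y)| \geq (1 - 3\delta) n$, whence $|N_2(y)| \geq (1 - 3\delta) n - d^+(y) \geq d(y) - d^+(y) = d^-(y)$. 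The last inequality uses the degree bound together with $p \leq 1/2 - \eta$ to ensure $d(y) \leq (1 - 3\delta) n$. Thus $y$ is a Sullivan vertex.

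The main obstacle is establishing the reduction to \eqref{eq:keyprop}. Unlike in the proof of \cref{thm:below}, where the conclusion of Claim 2 directly contradicts the assumption of no Seymour vertex, the analogous statement for Sullivan only yields $|N_2(x_i)| \geq d^+(x_i)$, which fails to give a Sullivan vertex when $d^-(x_i) > d^+(x_i)$. Handling this case will require either a refined Claim-2-type argument---for instance, by exploiting the substructure around such ``unbalanced'' vertices to locate a Sullivan vertex elsewhere---or a modified final step tolerating a small number of vertices violating \eqref{eq:keyprop}, while bounding their contribution to $\vec{e}(B_2, B_1)$. The condition $p = \omega(\log^{-1/3} n)$ should be crucial for making the quantitative estimates go through in this adaptation.
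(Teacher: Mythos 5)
Your proposal correctly identifies the main obstacle — the Seymour-style argument only bounds $|N^+_2(x_i)|$ against $d^+(x_i)$, whereas the Sullivan condition compares against $d^-(x_i)$ — but you stop at flagging it and do not resolve it. That unresolved step is exactly where the paper puts in genuinely new ideas, so the proposal as written has a real gap.

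Concretely, the paper's version of your Claim~2 (\cref{claim2bis}) never tries to show $|N^+_2(x_i)|\geq d^+(x_i)$. Instead, taking $A\subseteq N^+(x_i)\cap X_{i-1}$ with $|A|=C\log n$ and $B:=N^+(A)$, it uses the degree ordering (every $v\in A$ has $d^+(v)\geq d^+(x_i)$) together with the density upper bound $\vec e(A,B)\leq(1+\delta)|A||B|p$ to conclude $|B|\geq(1+\delta)np$. Combined with the degree concentration $d(x_i)\leq(1+\delta)np$, this gives $|N^+_2(x_i)|\geq|B|-|N^+(x_i)|\geq(1+\delta)np-|N^+(x_i)|\geq|N^-(x_i)|$, i.e.\ a Sullivan vertex directly, with no need to know whether $d^+(x_i)\geq d^-(x_i)$. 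Crucially this argument only works under the extra hypothesis $d^+(x_i)\geq(1+\eps)np^2$ (otherwise the lower bound on $\vec e(A,B)$ is useless and $|B|$ need not be large enough to apply the density estimate). To supply that hypothesis for the first $2\delta n$ vertices in the outdegree order, the paper proves a separate degree-sequence lemma (\cref{claim3bis}) with no analogue in the Seymour proof: it uses the total edge count to show that no $\alpha n$ vertices can all have outdegree below $(1+3\delta)\beta np$ for appropriate $\alpha,\beta$. Your sketch does not contain either of these two ingredients — the reformulated target for the Claim~2 step, or the outdegree-sequence lemma — and without them the reduction to the key property does not go through.

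A secondary issue is that your concentration statement $e(A,B)=(p\pm\delta)|A||B|$ for sets of size $\geq C\log n/p$ is additive in $\delta$ and hence vacuous when $p=o(1)$ and $\delta=\eps/3$ is a constant (the lower bound $(p-\delta)|A||B|$ becomes negative). The bound must be taken multiplicatively, $e(A,B)=(1\pm\delta)|A||B|p$, as in \cref{claim1bis}; the same change is needed in your final step, where $\vec e(B_1,B_2)\geq(p-2\delta)\delta^2n^2$ should read $\vec e(B_1,B_2)\geq(1-2\delta)\delta^2n^2p$. With those corrections the final step of your argument (finding $y$, the set $Y$, and bounding the complement $Z$) does essentially match the paper's final step, so that part of the plan is sound; the gap is the middle reduction.
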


We will need to bound large deviations for the upper tail of some binomial random variables.
For this, we will use the following Chernoff bound (see, e.g., \cite[Corolary~A.1.10]{AS16}).\COMMENT{Let $Y_1, \dots Y_n$ be mutually independent with $\mathbb{P}[Y_i=1-p]=p$ and $\mathbb{P}[Y_i=-p]=1-p$ for each $i$. Let $Y=Y_1+\dots+Y_n$. For $i\in [n]$, let $X_i\coloneqq Y_i+p$ and $X\coloneqq X_1+\dots+X_n$. Then, \cite[Corollary~A.1.10]{AS16} applied with $a=\delta np$ gives
\[\mathbb{P}[X\geq(1+\delta)np]=\mathbb{P}[Y\geq \delta np]\leq \exp(\delta np-(1+\delta)np\log(1+\delta))=(\nume^\delta/(1+\delta)^{1+\delta})^{np}.\]}

\begin{lemma}\label{eq:Chernoff_strong}
    Let\/ $X\sim\mathrm{Bin}(n,p)$ be a binomial random variable.
    Then, for all\/ $\delta>0$ we have that\/
    $\mathbb{P}[X\geq(1+\delta)np]\leq (\nume^\delta/(1+\delta)^{1+\delta})^{np}$.
\end{lemma}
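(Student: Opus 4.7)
The plan is to prove this via the standard exponential moment (Chernoff) method: apply Markov's inequality to $\nume^{tX}$ for a suitable $t>0$, evaluate the moment generating function of a binomial, and then optimise over $t$.

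First I would note that, for any $t>0$, Markov's inequality yields
\[\mathbb{P}[X\geq(1+\delta)np]=\mathbb{P}[\nume^{tX}\geq\nume^{t(1+\delta)np}]\leq\nume^{-t(1+\delta)np}\,\mathbb{E}[\nume^{tX}].\]
Since $X$ is a sum of $n$ independent Bernoulli$(p)$ variables, the moment generating function factorises as
\[\mathbb{E}[\nume^{tX}]=(1-p+p\nume^t)^n.\]
Using the elementary inequality $1+x\leq\nume^x$ with $x=p(\nume^t-1)$, this is at most $\nume^{np(\nume^t-1)}$, and so
\[\mathbb{P}[X\geq(1+\delta)np]\leq\exp\bigl(np(\nume^t-1)-t(1+\delta)np\bigr).\]

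Next, I would optimise the right-hand side over $t>0$. Differentiating the exponent in $t$ gives $np\nume^t-(1+\delta)np$, which vanishes at $t=\log(1+\delta)$ (and this is indeed a minimum, as the exponent is convex in $t$); note that $t>0$ since $\delta>0$. Substituting $\nume^t=1+\delta$ into the bound yields
\[\mathbb{P}[X\geq(1+\delta)np]\leq\exp\bigl(np\delta-(1+\delta)np\log(1+\delta)\bigr)=\left(\frac{\nume^\delta}{(1+\delta)^{1+\delta}}\right)^{np},\]
which is the desired inequality.

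There is no real obstacle here: each step is a textbook manipulation, and the only place where one must be a little careful is verifying that the optimising value $t=\log(1+\delta)$ is positive (which uses $\delta>0$) and that the inequality $1-p+p\nume^t\leq\nume^{p(\nume^t-1)}$ is valid for all $t\in\mathbb{R}$ (which follows from $1+x\leq\nume^x$). Since this is a standard result from \cite{AS16}, one could alternatively just quote it directly, but the self-contained proof above is short.
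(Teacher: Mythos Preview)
Your argument is correct and is precisely the standard exponential moment derivation of this Chernoff bound. The paper does not give its own proof of this lemma at all; it simply cites \cite[Corollary~A.1.10]{AS16}, so there is nothing further to compare.
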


\begin{proof}[Proof of \cref{prop:Sullivan1}]
    We begin our proof by observing that a.a.s.\ $G(n,p)$ satisfies certain properties.

    \begin{claim}\label{claim:Sulli1}
        A.a.s.\ $G\sim G(n,p)$ satisfies the following properties:
        \begin{enumerate}[label=$(\mathrm{\alph*})$]
            \item\label{SullItem1} For each pair of distinct vertices $u,v\in V(G)$, there are fewer than $np/2400$ paths of length~$2$ joining $u$ and $v$.
            \item\label{SullItem2} Every vertex $v\in V(G)$ satisfies $d(v)\leq4np$.
            \item\label{SullItem3} For every set $A\subseteq V(G)$ of $n/3$ vertices we have $e(A)\geq n^2p/25$.
        \end{enumerate}
    \end{claim}

    \begin{claimproof}
        All claims follow from direct applications of \cref{lem:Chernoff,eq:Chernoff_strong} (with \cref{eq:Chernoff_strong} being needed for \ref{SullItem1} and \ref{SullItem2}) and a union bound.\COMMENT{Fix a pair of distinct vertices $u$, $v$ and let $X$ denote the number of paths of length $2$ joining them.
            Note that $X\sim\mathrm{Bin}(n-2,p^2)$ is dominated by a binomial random variable $Y\sim\mathrm{Bin}(n,p^2)$.
            Therefore, 
            \[\mathbb{P}[X\geq np/2400]\leq\mathbb{P}[Y\geq np/2400]\leq\mathbb{P}[Y\geq(1+1/2500p)np^2],\]
            and now we apply \cref{eq:Chernoff_strong} with $\delta=1/2500p$.
            Note that in the given range of $p$ (and assuming that $n$ is sufficiently large) we have that $1/2500p>1$, and so
            \[\mathbb{P}[X\geq np/2400]\leq\left(\frac{\nume^{1/2500p}}{(1+1/2500p)^{1+1/2500p}}\right)^{np^2}\leq\left(\frac{\nume^{1/2500p}}{(1+1/2500p)^{1/2500p}}\right)^{np^2}=\left(\frac{\nume}{1+1/2500p}\right)^{np/2500}.\]
            If $p=\Omega(n^{-1/2})$, then for sufficiently large $n$ we have
            \[\left(\frac{\nume}{1+1/2500p}\right)^{np/2500}\leq (10^{-10})^{np}=10^{-\Omega(n^{1/2})}=o(n^{-2}).\]
            If $p=O(n^{-1/2})$, then for sufficiently large $n$ we have
            \[\left(\frac{\nume}{1+1/2500p}\right)^{np/2500}\leq (n^{-1/3})^{np}=n^{-\Omega(\log n)}=o(n^{-2}).\]
            So for our range of $p$ we get that $\mathbb{P}[X\geq np/2400]=o(n^{-2})$. The conclusion follows by a union bound over all possible choices of pairs of vertices.}\COMMENT{Observe that $\nume^\delta/(1+\delta)^{1+\delta}<1$ for all $\delta>0$ (and that it is a decreasing function of $\delta$).
            Therefore, by \cref{eq:Chernoff_strong},
            \[\mathbb{P}[d(v)\geq4np]\leq\mathbb{P}[d(v)\geq4(n-1)p]\leq\left(\frac{\nume^3}{256}\right)^{(n-1)p}\leq\left(\frac{\nume^3}{256}\right)^{3(n-1)\log n/4n}=(1+o(1))\left(\frac{\nume^3}{256}\right)^{3\log n/4}\]
            and, since $(\nume^3/256)^{3/4}<1/e$, it follows that $\mathbb{P}[d(v)\geq4np]=o(1/n)$.
            The conclusion follows by a union bound over all vertices.}\COMMENT{Fix a set $A$ of the desired size.
            By a direct application of \cref{lem:Chernoff}, we have that
            \[\mathbb{P}[e(A)\leq n^2p/25]\leq\mathbb{P}[e(A)\leq 3\binom{n/3}{2}p/4]\leq2\nume^{-\binom{n/3}{2}p/48}=\nume^{-\Omega(n\log n)}.\]
            This beats the union bound over the at most $2^n$ choices for $A$.}
    \end{claimproof}

    Now let $G$ be an arbitrary $n$-vertex graph satisfying the properties from \cref{claim:Sulli1}.
    Note that, if in an orientation $\vec{G}$ of $G$ there is some vertex $v\in V(G)$ such that $|N_2^+(v)|>4np$, then trivially $v$ must be a Sullivan vertex, since by \cref{claim:Sulli1}~\ref{SullItem2} all vertices have indegree at most $4np$.
    We are going to show that, in fact, every orientation $\vec{G}$ of $G$ contains at least one such vertex, thus deriving the desired result.

    We argue by contradiction.
    Suppose that there is an orientation $\vec{G}$ of $G$ such that every vertex $v\in V(G)$ satisfies $|N_2^+(v)|\leq4np$.
    We are going to count consistently oriented paths of length~$2$ in two different ways to reach a contradiction.
    Let $P$ denote the number of such paths.
    First, we may count~$P$ by adding over all vertices the number of paths starting at that vertex.
    For each vertex, the number of such paths is less than $n^2p^2/300$: indeed, any such path must have its endpoint in $N^+(v)\cup N^+_2(v)$, which is a set of size at most $8np$ by \cref{claim:Sulli1}~\ref{SullItem2} and the assumption on the orientation, and by \cref{claim:Sulli1}~\ref{SullItem1} there are fewer than $np/2400$ paths joining any given pair of vertices.
    Thus, 
    \begin{equation}\label{equa:Sulli1}
        P<n^3p^2/300.
    \end{equation}

    Next, we may obtain $P$ by adding over all vertices $v\in V(G)$ the number of paths of length $2$ whose middle vertex is $v$. 
    Observe that, for each fixed $v$, this number is $d^-(v)d^+(v)$.
    We claim that at least $2n/3$ vertices have outdegree at least $np/10$.
    Indeed, if we assume otherwise, there is a set $A\subseteq V(G)$ of $n/3$ vertices of outdegree at most $np/10$, so by \cref{claim:Sulli1}~\ref{SullItem3} we have that
    \[\frac{n^2p}{25}\leq e(A)\leq \sum_{v\in A}d^+(v)\leq\frac{n}{3}\frac{np}{10}=\frac{n^2p}{30},\]
    a contradiction.
    Similarly, at least $2n/3$ vertices must have indegree at least $np/10$.
    But this means that at least $n/3$ vertices have both in- and outdegree at least $np/10$, which immediately implies that 
    \[P\geq n^3p^2/300.\]
    This results in a contradiction with \eqref{equa:Sulli1}.
\end{proof}

\begin{proof}[Proof of \cref{prop:Sullivan2}]
Consider $G(n,p)$.
As $\limsup_{n\to\infty}p<1/2$, there exist $n_0\in\mathbb{N}$ and a constant $\eps>0$ such that $p\leq1/2-\eps$ for all $n\geq n_0$.
Let $C=C(\eps)$ be sufficiently large and let $\delta\coloneqq \eps/5$.
We will use the following properties, which follow from standard applications of \cref{lem:Chernoff}.\COMMENT{For property \ref{claim1bisitem1}, for any fixed vertex $v$ we have that
    \begin{align*}
        \mathbb{P}[d(v)\neq (1\pm \delta)np]&\leq \mathbb{P}[d(v)\neq (1\pm\delta/2)(n-1)p]\leq 2\exp(-\delta^2(n-1)p/12)
        =\exp(-\omega(n^{2/3}))=o(n^{-1}).
    \end{align*}
    The conclusion follows by a union bound over all vertices.}\COMMENT{For property \ref{claim1bisitem2}, fix a set $A$ with the correct size.
    Then,
    \[\mathbb{P}\left[e(A)\neq(1\pm\delta)\binom{|A|}{2}p\right]\leq2\exp(-\delta^2|A|^2p/7)=\exp(-\Omega(n^{5/3}))=o(2^{-n}).\]
    So the result follows by a union bound on the at most $2^n$ choices for $A$.
    }\COMMENT{For property \ref{claim1bisitem3}, fix any disjoint sets $A$, $B$ with the correct sizes.
    Then,
    \[\mathbb{P}[e(A,B)\neq(1\pm\delta)|A||B|p]\leq2\exp(-\delta^2|A||B|p/3)=\exp(-\omega(n p^3\log n))=\exp(-\omega(n))=o(3^n).\]
    So the result follows by a union bound on the at most $3^n$ choices for $(A,B)$.
    }\COMMENT{For property \ref{claim1bisitem4}, fix any disjoint sets $A$, $B$ with the correct sizes.
    Then,
    \[\mathbb{P}[e(A,B)\neq(1\pm\delta)|A||B|p]\leq2\exp(-\delta^2|A||B|p/3)=\exp(-\Omega(n^{3/2} p))=\exp(-\omega(n))=o(3^n).\]
    So the result follows by a union bound on the at most $3^n$ choices for $(A,B)$.}

\begin{claim}\label{claim1bis}
A.a.s.\ $G\sim G(n,p)$ satisfies the following properties:
\begin{enumerate}[label=$(\mathrm{\alph*})$]
    \item\label{claim1bisitem1} Every vertex\/ $v\in V(G)$ satisfies that\/ $d(v)=(1\pm\delta)np$. 
    \item\label{claim1bisitem2} For every set $A\subseteq V(G)$ with $|A|\geq \delta n$ we have that $e_G(A)=(1\pm\delta) \inbinom{|A|}{2}p$.
    \item\label{claim1bisitem3} For every pair of disjoint sets\/ $A,B\subseteq V(G)$ with\/ $|A|\geq C\log n$ and $|B|\geq np^2$ we have that\/ $e(A,B)=(1\pm\delta)|A||B|p$.
    \item\label{claim1bisitem4} For every pair of disjoint sets\/ $A,B\subseteq V(G)$ with\/ $|A|,|B|\geq n^{3/4}$ we have that\/ $e(A,B)=(1\pm\delta)|A||B|p$.
\end{enumerate}
\end{claim}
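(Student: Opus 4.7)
The plan is to derive each of the four properties from Chernoff's bound (\cref{lem:Chernoff}) applied to an appropriate binomial random variable, followed by a union bound over the relevant family of sets. The assumption $p=\omega(\log^{-1/3}n)$ will ensure that in each case the concentration is strong enough to absorb the size of the family.

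For \ref{claim1bisitem1}, fix $v\in V(G)$ and observe that $d(v)\sim\mathrm{Bin}(n-1,p)$; \cref{lem:Chernoff} (applied with a slightly smaller deviation parameter to pass from $(n-1)p$ to $np$) then gives $\mathbb{P}[d(v)\neq(1\pm\delta)np]\leq 2\exp(-\Theta(np))$, and since $np=\omega(\log n)$ this is $o(1/n)$, so a union bound over the $n$ vertices suffices. For \ref{claim1bisitem2}, fix a set $A$ with $|A|\geq\delta n$; then $e_G(A)\sim\mathrm{Bin}(\inbinom{|A|}{2},p)$ and Chernoff gives failure probability $\exp(-\Theta(n^2p))$, which comfortably beats the $2^n$ choices for $A$.

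Properties \ref{claim1bisitem3} and \ref{claim1bisitem4} proceed in the same spirit, but here the union bound runs over ordered pairs of disjoint sets: each vertex may be in $A$, in $B$, or in neither, giving at most $3^n$ such pairs. For \ref{claim1bisitem3}, the failure probability for a fixed pair is $\exp(-\Theta(|A||B|p))$, and the lower bounds on $|A|$ and $|B|$ yield $|A||B|p\geq Cnp^3\log n$; the condition $p=\omega(\log^{-1/3}n)$ is precisely what makes $p^3\log n=\omega(1)$, so $|A||B|p=\omega(n)$ and the bound $\exp(-\omega(n))$ beats $3^n$. For \ref{claim1bisitem4}, the same union bound applies and one has $|A||B|p\geq n^{3/2}p=\omega(n)$, since $p=\omega(n^{-1/2})$ (a weaker consequence of the hypothesis on $p$).

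The only (minor) point of care is the tight matching between the hypothesis on $p$ and the parameters in \ref{claim1bisitem3}: we need the exponent $|A||B|p$ to beat $n$ by a $\omega(1)$ factor, and this is exactly what the $\omega(\log^{-1/3}n)$ bound was designed to deliver. Everything else is a routine bookkeeping exercise.
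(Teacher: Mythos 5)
Your proposal is correct and follows essentially the same path as the paper: each item is a direct Chernoff estimate on the relevant binomial (degree, $e_G(A)$, or $e(A,B)$) followed by a union bound over $n$, $2^n$, or $3^n$ choices, with the hypothesis $p=\omega(\log^{-1/3}n)$ invoked in exactly the places you identify (most critically in \ref{claim1bisitem3}, where $|A||B|p\geq Cnp^3\log n$ must be $\omega(n)$, and more loosely in \ref{claim1bisitem4} via $p=\omega(n^{-1/2})$). Nothing to add.
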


Now, let $G$ be any $n$-vertex graph satisfying the properties of \cref{claim1bis}, and let $\vec{G}$ be an arbitrary orientation of $G$. 
We are going to show that, if $n$ is sufficiently large, then $\vec{G}$ contains a Sullivan vertex.
Let $x_1,\ldots,x_n$ be a labelling of $V(G)$ such that for all $i,j\in[n]$ with $i\leq j$ we have that $d^{+}(x_i)\geq d^{+}(x_j)$; that is, the labels are assigned by decreasing order of the outdegrees of the vertices.
Let $X_0\coloneqq\varnothing$ and, for each $i\in[n]$, let $X_i\coloneqq\{x_j:j\in[i]\}$.

\begin{claim}\label{claim2bis}
If there is some\/ $i\in[n]$ such that\/ $d^+(x_i)\geq(1+\eps)np^2$ and\/ $|N^+(x_i)\cap X_{i-1}|\geq C\log n$, then\/ $x_i$ is a Sullivan vertex.
\end{claim}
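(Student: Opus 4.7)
The plan is to adapt the argument of \cref{claim2} from Seymour's setting to Sullivan's. Set $v\coloneqq x_i$ and $m\coloneqq d^+(v)\geq(1+\eps)np^2$, and let $A\coloneqq N^+(v)\cap X_{i-1}$, so $|A|\geq C\log n$ and by the choice of ordering every $u\in A$ has $d^+(u)\geq m$. Writing $W\coloneqq V\setminus(\{v\}\cup N^+(v))$, partition $A=S\cup T$ with $S\coloneqq\{u\in A:\vec{e}(u,N^+(v))\geq(1+\eps/2)mp\}$ and $T\coloneqq A\setminus S$. By construction, every $u\in T$ satisfies $\vec{e}(u,W)\geq d^+(u)-(1+\eps/2)mp\geq m\bigl(1-(1+\eps/2)p\bigr)$.

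First I would argue $|S|<C\log n$ via a density contradiction: if $S'\sub S$ has $|S'|=C\log n$, then
\[e(S',N^+(v)\setminus S')\geq|S'|\bigl((1+\eps/2)mp-C\log n\bigr)>(1+\delta)|S'|mp,\]
where the last inequality uses $mp\geq(1+\eps)np^3=\omega(\log n)$ in the range $p=\omega(\log^{-1/3}n)$ together with $\eps/2>\delta=\eps/5$. This contradicts \cref{claim1bis}~\ref{claim1bisitem3} applied to $S'$ and $N^+(v)\setminus S'$ (note $|N^+(v)\setminus S'|\geq m-C\log n\geq np^2$). Hence $|T|\geq|A|-C\log n$, and summing the above over $T$ (observing $\vec{e}(T,W)=\vec{e}(T,N^+_2(v))$) yields $\vec{e}(T,N^+_2(v))\geq|T|m\bigl(1-(1+\eps/2)p\bigr)$.

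Combining this with the density upper bound $e(T,N^+_2(v))\leq(1+\delta)|T||N^+_2(v)|p$ from \cref{claim1bis}~\ref{claim1bisitem3} or \ref{claim1bisitem4} gives
\[|N^+_2(v)|\geq\frac{m\bigl(1-(1+\eps/2)p\bigr)}{(1+\delta)p}.\]
Since $|N^-(v)|\leq d(v)-m\leq(1+\delta)np-m$ by \cref{claim1bis}~\ref{claim1bisitem1}, verifying $|N^+_2(v)|\geq|N^-(v)|$ reduces, after substituting $m\geq(1+\eps)np^2$, to the inequality $(1+\eps)\bigl[1+p(\delta-\eps/2)\bigr]\geq(1+\delta)^2$, which holds comfortably for $\delta=\eps/5$ and $p<1/2$. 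Hence $x_i$ is a Sullivan vertex.

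The main technical obstacle will be certifying the density upper bound on $e(T,N^+_2(v))$, since a priori we do not know that $|N^+_2(v)|\geq np^2$ (needed by \ref{claim1bisitem3}) nor that $|T|,|N^+_2(v)|\geq n^{3/4}$ (needed by \ref{claim1bisitem4}). The trivial lower bound $|N^+_2(v)|\geq m(1-(1+\eps/2)p)$, which is already $\gg n^{3/4}$ in the range $p=\omega(\log^{-1/3}n)$, secures that prerequisite for \ref{claim1bisitem4}; a short case analysis on the size of $T$, coupled with a bootstrap using this trivial estimate to upgrade to $|N^+_2(v)|\geq np^2$ when needed for \ref{claim1bisitem3}, then handles the remaining regime.
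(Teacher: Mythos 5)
Your proposal takes a genuinely different route from the paper. You decompose $A$ into $S$ and $T$ as in the Seymour-case argument (\cref{claim2}), lower-bound $\vec{e}(T,N_2^+(v))$, and then invoke a density upper bound to extract a lower bound on $|N_2^+(v)|$ alone. The paper instead lets $A\subseteq N^+(x_i)\cap X_{i-1}$ be a set of size exactly $C\log n$, sets $B\coloneqq N^+(A)$, bounds $\vec{e}(A,B)\geq|A|(d^+(x_i)-|A|)$ from below (using only the vertex ordering) and $e(A,B)\leq(1+\delta)|A||B|p$ from above via \cref{claim1bis}~\ref{claim1bisitem3}, and concludes $|B|\geq(1+\delta)np$. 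Since $B\setminus N^+(x_i)\subseteq N_2^+(x_i)$ and $|N^-(x_i)|\leq(1+\delta)np-|N^+(x_i)|$, it follows directly that $x_i$ is a Sullivan vertex. The point is that for Sullivan's conjecture one does not need to isolate $N_2^+(v)$: bounding $|N^-(v)|$ by $d(v)-|N^+(v)|$ means it suffices to show $|N^+(v)\cup N_2^+(v)|$ is large, and $N^+(A)$ sits inside this union up to one vertex. This dispenses with the $S$/$T$ decomposition entirely.

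However, there is a genuine gap in your proof. The hypothesis only gives $|A|\geq C\log n$, and your argument yields $|S|<C\log n$, hence $|T|>|A|-C\log n$, which is only $|T|\geq 1$. To apply \cref{claim1bis}~\ref{claim1bisitem3} (or \ref{claim1bisitem4}) to the pair $(T,N_2^+(v))$ you need $|T|\geq C\log n$ (or $|T|\geq n^{3/4}$), and this is not guaranteed. You carefully addressed the prerequisite on $|N_2^+(v)|$ but not the one on $|T|$. (In the Seymour analogue this step works because the claim there is stated with the larger threshold $C'\log n=2C\log n/\eps$, so that $|T|\geq(C'-C)\log n\geq C\log n$; here the claim is stated with only $C\log n$.) Note that if $|T|$ is a constant, the trivial bound $|N_2^+(v)|\geq m\bigl(1-(1+\eps/2)p\bigr)$ is all you get, and one checks that $m\bigl(1-(1+\eps/2)p\bigr)\geq(1+\delta)np-m$ fails when $p\to 0$ in the allowed range, so the gap cannot simply be dismissed. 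A clean fix inside your framework is to drop the $S$/$T$ split altogether and bound $\vec{e}(A,N^+(v)\setminus A)\leq e(A,N^+(v)\setminus A)\leq(1+\delta)|A|mp$ directly by \cref{claim1bis}~\ref{claim1bisitem3} (using a subset $A$ of size exactly $C\log n$), giving $\vec{e}(A,N_2^+(v))\geq(1-o(1))|A|m\bigl(1-(1+\delta)p\bigr)$ and then applying \ref{claim1bisitem3} again to $(A,N_2^+(v))$; this recovers a bound on $|N_2^+(v)|$ without ever needing a lower bound on $|T|$.
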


\begin{claimproof}
Assume there is some $i\in[n]$ satisfying the conditions.
Let $A\subseteq N^+(x_i)\cap X_{i-1}$ be such that $|A|=C\log n$, and let $B\coloneqq N^+(A)$.
By using the ordering of the vertices and \cref{claim1bis}~\ref{claim1bisitem3},\COMMENT{Notice that we may use \cref{claim1bis}~\ref{claim1bisitem3} because, by simply looking at the ``forward'' outneighbours of the last vertex in $A$, we are guaranteed that $|B|\geq(1+\eps)np^2-C\log n\geq np^2$.} it follows that
\[(1-o(1))(1+\eps)np^2|A|\leq|A|(d^+(x_i)-|A|)\leq\vec{e}(A,B)\leq(1+\delta)|A||B|p,\]
where the first inequality holds by the lower bound on $p$.
It follows that $|B|\geq(1+\delta)np$.\COMMENT{Reordering the expression above, we have that $|B|\geq(1-o(1))(1+\eps)np/(1+\delta)$, so it suffices to verify that $(1-o(1))(1+\eps)\geq(1+\delta)^2$, which is equivalent to $\eps-o(1)\geq2\delta+\delta^2$.
    As $\delta\leq\eps /3$, the conclusion follows.}
As $B\setminus N^+(x_i)\subseteq N_2^+(x_i)$ and $|N^-(x_i)|\leq(1+\delta)np-|N^+(x_i)|$ by \cref{claim1bis}~\ref{claim1bisitem1}, it follows that $x_i$ is a Sullivan vertex.
\end{claimproof}

Next we show that there exist many vertices whose outdegree must be sufficiently large for \cref{claim2bis} to apply to them.

\begin{claim}\label{claim3bis}
    For every $\alpha\in[\delta,1-\delta]$ and every $\beta\in[0,\frac{1+\delta}{1+3\delta}\frac{\alpha}{2}-\frac{\delta}{(1+3\delta)\alpha}]$, there is no set $A\subseteq V(G)$ of size $|A|\geq\alpha n$ such that $d^+(v)<(1+3\delta)\beta np$ for all $v\in A$.
\end{claim}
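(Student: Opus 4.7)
My plan is to argue by contradiction. Fix $\alpha \in [\delta, 1-\delta]$ and $\beta$ in the stated range, and suppose there exists some $A \subseteq V(G)$ with $|A| \geq \alpha n$ and $d^+(v) < (1+3\delta)\beta np$ for every $v \in A$. Summing this outdegree inequality over $v \in A$ gives $\sum_{v\in A} d^+(v) < |A|(1+3\delta)\beta np$. On the other hand, every edge with both endpoints in $A$ contributes exactly $1$ to the outdegree of its tail, so $\sum_{v\in A} d^+(v) \geq e(A)$. Since $|A| \geq \alpha n \geq \delta n$, \cref{claim1bis}~\ref{claim1bisitem2} applies and provides the matching density estimate $e(A) \geq (1-\delta)\binom{|A|}{2}p$.

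Combining these three bounds, canceling the common factor $|A|p$, and substituting $|A| \geq \alpha n$, I would then obtain a lower bound on $\beta$ of the form
\[
    \beta > \frac{(1-\delta)\alpha}{2(1+3\delta)} - \frac{1-\delta}{2(1+3\delta)n}.
\]
To close the argument, I still need to verify that this strictly exceeds the hypothesised upper bound $\frac{1+\delta}{1+3\delta}\frac{\alpha}{2}-\frac{\delta}{(1+3\delta)\alpha}$ on $\beta$, producing the contradiction. After multiplying through by $2(1+3\delta)$ and rearranging, the required inequality reduces to the elementary statement $\delta(1-\alpha^2) > (1-\delta)\alpha/(2n)$. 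The constraint $\alpha \leq 1-\delta$ forces $1-\alpha^2 \geq 2\delta - \delta^2 \geq \delta$, so the left-hand side is at least $\delta^2$, whereas the right-hand side is $O(1/n)$, so the inequality holds for $n$ sufficiently large.

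I do not anticipate any real obstacle here: the core of the argument is a one-line edge count via \cref{claim1bis}~\ref{claim1bisitem2}. The only points worth double-checking are that the restriction $\alpha \geq \delta$ in the hypothesis is precisely what licenses the application of \cref{claim1bis}~\ref{claim1bisitem2}, and that the restriction $\alpha \leq 1-\delta$ is precisely what supplies the slack $1-\alpha^2 \geq \delta$ needed for the final comparison of the two explicit bounds on $\beta$ to go through.
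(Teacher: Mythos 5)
Your proof is correct, and it takes a genuinely different and somewhat more direct route than the paper's. The paper's argument sums $d^+(v)$ over \emph{all} of $V(G)$ and then decomposes the contribution into edges inside $A$, inside $V\setminus A$, and across, invoking \cref{claim1bis}~\ref{claim1bisitem2} (twice) and~\ref{claim1bisitem4}; this yields the lower bound $\beta>\frac{1+\delta}{1+3\delta}\frac{\alpha}{2}-\frac{\delta}{(1+3\delta)\alpha}$, which contradicts the hypothesis on the nose and requires no further comparison. You instead sum $d^+(v)$ only over $A$, observe $\sum_{v\in A}d^+(v)\geq e(A)$, and apply \cref{claim1bis}~\ref{claim1bisitem2} once to $A$ itself; this produces a different-looking lower bound $\beta>\frac{(1-\delta)\alpha}{2(1+3\delta)}-O(1/n)$, and the constraint $\alpha\leq 1-\delta$ is then what supplies the slack $1-\alpha^2\geq\delta$ needed to see it exceeds the hypothesised upper bound for large $n$. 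Your approach is cleaner in that it uses a single concentration estimate rather than three and avoids the cross-edge term entirely, at the cost of a short explicit comparison at the end; and it works directly with $|A|\geq\alpha n$ rather than passing to a subset of size exactly $\alpha n$. Both arguments are sound and of comparable length.
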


\begin{claimproof}
    Let us assume for a contradiction that there exists a set $A\subseteq V(G)$ of size $\alpha n$ such that $d^+(v)<(1+3\delta)\beta np$ for all $v\in A$.
    By using the properties of \cref{claim1bis}, we note that
    \begin{align*}
        (1-\delta)n^2p/2\leq\sum_{v\in V(G)}d^+(v)&=\sum_{v\in A}d^+(v)+\sum_{v\in V(G)\setminus A}d^+(v)\\
        &<(1+3\delta)\alpha\beta n^2p+(1+\delta)\binom{(1-\alpha)n}{2}p+(1+\delta)\alpha(1-\alpha)n^2p.
    \end{align*}
    By reordering, one can readily verify that this implies that%
        \COMMENT{We have \[(1+\delta)\binom{(1-\alpha)n}{2}p+(1+\delta)\alpha(1-\alpha)n^2p\leq (1+\delta)(1-\alpha)n^2p\frac{1+\alpha}{2}=\frac{(1+\delta)(1-\alpha^2)}{2}n^2p\]
        so
        \begin{align*}
            \beta &> \frac{1-\delta}{2(1+3\delta)\alpha}-\frac{(1+\delta)(1-\alpha^2)}{2(1+3\delta)\alpha}
            =\frac{1-\delta-(1+\delta-\alpha^2(1+\delta))}{2(1+3\delta)\alpha}=-\frac{\delta}{(1+3\delta)\alpha}+\frac{\alpha(1+\delta)}{2(1+3\delta)}.
        \end{align*}} 
    \[\beta>\frac{1+\delta}{1+3\delta}\frac{\alpha}{2}-\frac{\delta}{(1+3\delta)\alpha},\]
    a contradiction.
\end{claimproof}

Set $\alpha\coloneqq1-2\delta$ and $\beta\coloneqq(1+\eps)p/(1+3\delta)$.
A standard algebraic manipulation shows that $\beta\leq\frac{1+\delta}{1+3\delta}\frac{\alpha}{2}-\frac{\delta}{(1+3\delta)\alpha}$,\COMMENT{Indeed,
\[\beta=\frac{1+\eps}{1+3\delta}p\leq\frac{1+\eps}{1+3\delta}(1/2-\eps)\leq\frac{1+\delta}{1+3\delta}\frac{\alpha}{2}-\frac{\delta}{(1+3\delta)\alpha}\]
(where the first inequality simply uses the assumed upper bound on $p$) if and only if
\begin{align*}
    &\frac{(1+\eps)(1/2-\eps)}{1+3\delta}\leq\frac{(1+\delta)(1-2\delta)}{2(1+3\delta)}-\frac{\delta}{(1+3\delta)(1-2\delta)}\\
    \iff&2(1+\eps)(1/2-\eps)\leq(1+\delta)(1-2\delta)-\frac{2\delta}{1-2\delta}.
\end{align*}
As $1-2\delta\geq1/2$, it suffices to verify that 
\[2(1+\eps)(1/2-\eps)\leq(1+\delta)(1-2\delta)-4\delta\iff1-\eps-2\eps^2\leq1-5\delta-2\delta^2,\]
which holds by the choice of $\delta$.} so we may apply \cref{claim3bis} with these parameters to conclude that
\begin{equation}\label{eq:keypropbisnew}
\text{for all $i\in[2\delta n]$ we have $d^+(x_i)\geq(1+\eps)np^2$.}
\end{equation}
By \cref{claim2bis}, we may thus assume that $\vec{G}$ satisfies that
\begin{equation}\label{eq:keypropbis}
\text{for all $i\in[2\delta n]$ we have $|N^+(x_i)\cap X_{i-1}|\leq C\log n$.}
\end{equation} 

Now let $B\coloneqq X_{2\delta n}$ be partitioned into $B_1\coloneqq X_{\delta n}$ and $B_2\coloneqq B\setminus B_1$.
By \cref{claim1bis}~\ref{claim1bisitem4}, we have that $e(B_1,B_2)\geq(1-\delta)|B_1||B_2|p=(1-\delta)\delta^2 n^2p$.
By \eqref{eq:keypropbis}, for each $v\in B_2$ we have $\vec{e}(v,B_1)\leq C\log n$, and so $\vec{e}(B_1,B_2)\geq(1-2\delta)\delta^2n^2p$.
It follows by averaging that there is some vertex $y\in B_1$ such that $\vec{e}(y,B_2)\geq(1-2\delta)\delta np$.
Let $Y\subseteq N^+(y)\cap B_2$ be a set with $|Y|=(1-2\delta)\delta np$, and let $Z\coloneqq N^+(Y)$.
Observe that $|Z|\geq np^2$ by \eqref{eq:keypropbisnew} and \eqref{eq:keypropbis}, and so, by \cref{claim1bis}~\ref{claim1bisitem4}, $\vec{e}(Y,Z)\leq(1+\delta)|Y||Z|p$.
On the other hand, using \eqref{eq:keypropbisnew} and \cref{claim1bis}~\ref{claim1bisitem2}, we have that $\vec{e}(Y,Z)\geq(1+\eps-\delta+2\delta^2)np^2|Y|$.\COMMENT{We have that the number of edges can be computed as the sum of the outdegrees minus the number of edges that stay inside $Y$.
That is,
\begin{align*}
    \vec{e}(Y,Z)&=\sum_{v\in Y}d^+(v)-e(Y)\geq|Y|(1+\eps)np^2-(1+\delta)|Y|^2p/2\geq|Y|(1+\eps)np^2-|Y|^2p\\
    &=|Y|((1+\eps)np^2-(1-2\delta)\delta np^2)=(1+\eps-\delta+2\delta^2)np^2|Y|,
\end{align*}
where the first inequality uses \eqref{eq:keypropbisnew} and \cref{claim1bis}~\ref{claim1bisitem2}.}
Thus, we conclude that $|Z|\geq(1+\delta)np$.\COMMENT{$|Z|\geq \frac{1+\eps-\delta+2\delta^2}{1+\delta}np$ and this is at least $(1+\delta)np$ iff $\eps-\delta +2\delta^2\geq 2\delta +\delta^2$, that is, iff $\eps\geq3\delta-\delta^2$, which holds since $\delta\leq\eps/3$.}
As $Z\setminus N^+(y)\subseteq N_2^+(y)$ and $|N^-(y)|\leq(1+\delta)np-|N^+(y)|$ by \cref{claim1bis}~\ref{claim1bisitem1}, it follows that $y$ is a Sullivan vertex.
\end{proof}

\end{document}